\long\def\symbolfootnote[#1]#2{\begingroup%
\def\thefootnote{\fnsymbol{footnote}}\footnote[#1]{#2}\endgroup}
\newcommand{\tr}{\ensuremath{{}^T\!\!}}
\newcommand{\tra}{\ensuremath{{}^T}\!}
\newcommand{\C}{\mathfrak C}
\newcommand{\E}{\mathcal E}
\newcommand{\F}{\mathcal F}
\newcommand{\diag}{\textup{diag}}
\newtheorem{theorem}{Theorem}[section]
\newtheorem*{theorema}{Theorem A}
\newtheorem*{theoremb}{Theorem B}
\newtheorem{lemma}[theorem]{Lemma}
\newtheorem{corollary}[theorem]{Corollary}
\newtheorem{definition}{Definition}[section]
\newtheorem*{theorem*}{Theorem}
\numberwithin{equation}{section}
\newcommand{\ignore}[1]{}
\newcommand{\mynote}[1]{}
\def\Ddots{\mathinner{\mkern1mu\raise\p@
\vbox{\kern7\p@\hbox{.}}\mkern2mu
\raise4\p@\hbox{.}\mkern2mu\raise7\p@\hbox{.}\mkern1mu}}
\begin{document}
\title[Gaussian elimination in  unitary groups]{Gaussian elimination in unitary groups with an application to cryptography}
\author[Mahalanobis and Singh]{Ayan Mahalanobis and Anupam Singh} 
\address{IISER Pune, Dr.~Homi Bhabha Road, Pashan, Pune 411008, INDIA.} 
\email{ayan.mahalanobis@gmail.com}
\email{anupamk18@gmail.com}
\thanks{This work was supported by a SERB research grant MS: 831/13.}

\subjclass[2010]{94A60, 20H30}
\keywords{Unitary groups, Gaussian elimination, row-column operations.}

\begin{abstract} Gaussian elimination is used in special linear groups to
  solve the word problem. In this paper, we extend Gaussian
  elimination to unitary groups. These algorithms
  have an application in building a public-key cryptosystem, we demonstrate that.
\end{abstract}
\today
\maketitle
\section{Introduction}
Gaussian elimination is a very old theme in computational
mathematics. It was developed to solve linear simultaneous
equations. The modern day matrix theoretic approach was developed by John
von Neumann and the popular textbook version by Alan Turing. Gaussian elimination has many applications
and is a very well known mathematical method. We will not elaborate on it
any further, but will refer an interested reader to a nice article by Grcar~\cite{grc}. The way we look at Gaussian elimination is: it gives us an
algorithm to write any matrix of the \emph{general linear group},
GL$(d,\mathcal{K})$, of size $d$ over a field $\mathcal{K}$ as the product of elementary
matrices and a diagonal matrix with all ones except one entry, using
elementary operations. That
entry in the diagonal is the determinant of the matrix. There are many ways to look at
this phenomena. One simple way is: one can write the \emph{matrix
as a word in generators}. So in the language of computational group
theory the word problem in GL$(d,\mathcal{K})$ has an efficient algorithm --
Gaussian elimination.

We write this paper to say that one can have a very similar result
with unitary groups as well. We define
\textbf{elementary matrices} and \textbf{elementary operations} for unitary groups. These
matrices and operations are similar to that of \emph{elementary transvections}
and \emph{elementary row-column operations} for special linear
groups. Using these elementary matrices and elementary operations, we solve the word problem in unitary groups in a way that is very similar to the general linear groups. Similar algorithms are being developed for other classical groups and will be presented elsewhere.

In this paper, we work with a different set of generators than that is
usual in computational group theory. The usual generators are called
the \emph{standard generators}~\cite[Tables 1\,\&\,2]{lo}. Our
generators, we call them elementary matrices and are defined later,
have their root in the root spaces in Lie theory~\cite[Sections 11.3, 14.5]{ca} and have the disadvantage of being a larger set compared to that of the standard generators. However, standard generators being \lq\lq{multiplicative\rq\rq{} in nature, depends on the primitive element of a finite field, works only for finite fields. On the other hand, our generators, work for arbitrary fields. Using standard generators, one needs to solve the discrete logarithm problem often. No such need arises in our case. In the current literature, the best row-column operations in unitary groups is by Costi~\cite{costi} and implemented in Magma~\cite{magma} by Costi and C.~Schneider. Using their magma function \emph{ClassicalRewriteNatural}, we show that our algorithm is much faster, see Figure 1.

A need for row-column operations in classical groups was articulated by Seress~\cite[Page
 677]{seress} in 1997. Computational group theory and in particular 
\emph{constructive recognition of classical groups} have come a long way till then.
 We will not give a historical overview of this, an interested reader 
can find such an overview in the works of Brooksbank~\cite[Section 1.1]{brooksbank1},
Leedham-Green and O'Brien~\cite[Section 1.3]{lo} and O'Brien~\cite{ob1}. Two recent works that are relevant to our work are Costi~\cite{costi} and Ambrose et.~al.~\cite{csaba}.  
 
In this paper, we only deal with unitary groups defined by the Hermitian form $\beta$ defined later.
The Hermitian form for the even-order case works for \textbf{all characteristic}. However, in the odd-order case the $2$ in the upper-left makes it useless in the even characteristic. One can change this $2$ to a $1$ in $\beta$, however, then one needs to compensate that by putting $\frac{1}{2}$ in the generators. We tried, but were unable to extend our algorithm for the odd-order unitary group to even characteristic. For even-order unitary groups, the algorithm developed in this paper works for all characteristic. However, for the odd-order case only \textbf{odd characteristic will be considered}. 

\subsection{Notations} For the rest of the paper, let $\mathcal{K}$ be a quadratic extension of the field $k$.There is an automorphism of degree two involved with these extensions and will be denoted by $\sigma:x\mapsto\bar{x}$. In the case of $\mathbb{C}:\mathbb{R}$, $\sigma$ is the complex conjugation. In the case of a finite field $\mathbb{F}_{q^2}:\mathbb{F}_q$, $\sigma$ is the map $x\mapsto x^q$. We fix a non-zero $\varepsilon\in\mathcal{K}$ with $\bar\varepsilon=-\varepsilon$. Then every $x\in\mathcal{K}$ is of
the form $x=a+\varepsilon b$. We denote $\mathcal{K}^o=\{x\in\mathcal{K}\mid \bar x=-x\}$. We also denote $\mathcal{K}^1=\{x\in\mathcal{K}\mid x\bar x=1\}$. A $d\times d$ matrix $X$ is called Hermitian (skew-Hermitian) if $\tra{\bar X}=X$ ($\tra{\bar X}=-X$). Two important examples of $\mathcal{K}:k$ pairs that we have in mind are $\mathbb{C}:\mathbb{R}$ and $\mathbb{F}_{q^2}:\mathbb{F}_q$. 

The main result that we prove in this paper follows. The result is well known, however the algorithmic proof of the result is original. Moreover, this algorithm is of independent interest in other areas, for example, constructive recognition of classical groups.
\begin{theorema}\label{theorema}
For $d\geq 4$, using elementary operations, one can write any matrix $\mathcal{A}$ in $\text{U}(d,\mathcal{K})$,
the unitary group of size $d$ over $\mathcal{K}$, as product of
elementary matrices and a diagonal matrix. The diagonal matrix is of
the following form:
\begin{itemize}
\item $\left(
\begin{array}{cccc|cccc}
1&&&&&&&\\
&\ddots&&&&&&\\
&&1&&&&&\\
&&&\lambda&&&&\\
\hline
&&&&1&&&\\
&&&&&\ddots&&\\
&&&&&&1&\\
&&&&&&&\bar{\lambda}^{-1}
\end{array}\right)$where
  $\lambda\bar{\lambda}^{-1}=\det{\mathcal{A}}$ and $d=2l$.

\item $\left(
\begin{array}{c|cccc|cccc}
\alpha&&&&&&&&\\
\hline
&1&&&&&&&\\
&&\ddots&&&&&&\\
&&&1&&&&&\\
&&&&\lambda&&&&\\
\hline
&&&&&1&&&\\
&&&&&&\ddots&&\\
&&&&&&&1&\\
&&&&&&&&\bar{\lambda}^{-1}
\end{array}\right)$where $\alpha\bar\alpha=1$ and
  $\alpha\lambda\bar{\lambda}^{-1}=\det{\mathcal{A}}$ and $d=2l+1$.
\end{itemize}
Here $\bar{\lambda}$ is the image of $\lambda$ under the automorphism $\sigma$.
\end{theorema}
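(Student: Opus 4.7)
The plan is to mimic the classical Gaussian elimination for $\text{GL}(d,\mathcal{K})$, modified so that every elementary operation preserves the Hermitian form $\beta$. Since $\beta$ is (anti)diagonal --- pairing each hyperbolic row with its dual, with a single anisotropic row in the middle in the odd case --- the natural inductive step peels off a hyperbolic pair (the first and the last row/column), leaving the inner $(d-2)\times(d-2)$ block as a unitary matrix of smaller size.

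First I would clear the first column of $\mathcal{A}$ down to $e_1$ by multiplying on the right with a product of the elementary matrices coming from the appropriate root subgroups: the entries $a_{2,1},\ldots,a_{d,1}$ are killed one at a time by the elementary matrix attached to the relevant root, and the entry $a_{1,1}$ is brought to $1$ by a final elementary multiplication. Once the first column is $e_1$, the unitarity relation $\tra{\bar{\mathcal{A}}}\beta\mathcal{A}=\beta$ forces the last row to become $e_d^{T}$. A dual sequence of row operations then clears the last column above its bottom entry, and unitarity again forces the first row to be $e_1^{T}$. The result is block-decomposed as $\diag(1,\mathcal{A}',1)$ with $\mathcal{A}'$ a unitary matrix of rank $d-2$, to which I apply the induction hypothesis.

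At the base case I would verify by direct calculation that any unitary matrix of the appropriate small dimension becomes, after elementary operations, a diagonal of exactly the shape stated --- with the pair $(\lambda,\bar\lambda^{-1})$ absorbing the determinant and, in the odd case, the single anisotropic scalar $\alpha\in\mathcal{K}^{1}$ surviving in the first slot. The main obstacle, which forces some case splitting inside the induction, is the \textbf{pivoting problem}: when the entries required for the next elimination happen to vanish, one must first apply a preparatory elementary matrix to produce a nonzero pivot without undoing the work already done. A secondary subtlety, which explains the restriction to odd characteristic in the odd-dimensional case, is that the anisotropic coordinate couples to the hyperbolic part through a factor of $\frac{1}{2}$ in $\beta$, so characteristic $2$ breaks this coupling.
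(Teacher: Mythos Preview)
Your plan takes a genuinely different route from the paper. You propose induction on the hyperbolic rank, peeling off one pair at a time; the paper instead writes $g=\begin{pmatrix}A&B\\C&D\end{pmatrix}$ in $l\times l$ blocks, reduces all of $A$ to $\diag(1,\ldots,1,\lambda)$ at once using the E1 operations (which act on $A$ as ordinary $\mathrm{GL}_{l}$ row--column moves), and only then annihilates $C$ and $B$ in two further sweeps via E3 and E2. The block ordering makes the $O(l^{3})$ count immediate and isolates the unitary-specific content in a handful of structural lemmas (Corollary~\ref{corA}, Lemma~\ref{lemmaC}, Lemma~\ref{lemmaB}). A minor mismatch: in the paper's basis order $1,\ldots,l,-1,\ldots,-l$ the form $\beta$ is not anti-diagonal, so ``first and last row/column'' are not a hyperbolic pair --- you are implicitly reordering the basis.

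There is, however, a real gap in the sketch. You say the entries $a_{2,1},\ldots,a_{d,1}$ are killed one at a time by the appropriate root element, but the root subgroups are not interchangeable: the entry in the slot dual to the pivot ($a_{-1,1}$ in the paper's indexing) can only be cleared by $x_{-1,1}(s)$ with $s\in\mathcal{K}^{o}$, and the required ratio $-a_{-1,1}/a_{1,1}$ has no reason to lie in $\mathcal{K}^{o}$ a priori. It \emph{does} lie there once all the other entries of that column have already been cleared --- unitarity then forces $\bar a_{1,1}a_{-1,1}\in\mathcal{K}^{o}$ --- but that is exactly the content of Corollary~\ref{corA} and Lemma~\ref{lemmaC}, and your proposal does not surface it; this constraint, not the vanishing-pivot issue you flag, is the genuine obstruction separating the unitary case from $\mathrm{GL}$. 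A smaller slip: ``bring $a_{1,1}$ to $1$ by a final elementary multiplication'' cannot be a single step, since no torus element is among the listed generators; you need the three-transvection trick borrowed from $\mathrm{SL}_{2}$ using a second index, which is available precisely because $l\ge 2$.
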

 
 A trivial corollary (Theorem~\ref{main-cor}) of our algorithm is very similar to
 a result by Steinberg~\cite[\S6.2]{st3}, where he describes the generators
 of a projective-unitary group over odd characteristic. Our work is somewhat
 similar in nature to the work of Cohen~et.~al.~\cite{CMT}, where the
 authors study generalized row-column operations in Chevalley
 groups. They did not study twisted groups. 

We use the algorithm developed to construct a MOR
cryptosystem in unitary groups and study its security. From the
discussion in Section 6 of this paper it follows:
\begin{theoremb}
The security of the MOR cryptosystem over U$(d,q^2)$ is equivalent to
the hardness of the discrete logarithm problem in $\mathbb{F}_{q^{2d}}$.
\end{theoremb}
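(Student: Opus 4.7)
The plan is to decompose Theorem B into two polynomial-time reductions that together give the asserted equivalence. First I would reduce the security of the MOR cryptosystem over $G=\text{U}(d,q^2)$ to the ordinary discrete logarithm problem in $G$ itself, using Theorem A as the essential ingredient. Second I would show that the DLP in $G$ is polynomial-time equivalent to the DLP in the multiplicative group of $\mathbb{F}_{q^{2d}}$ by means of a Coxeter (Singer-type) cyclic torus.

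For the first step, recall that in an MOR cryptosystem the public key is a pair $(\phi,\phi^m)$, and one may take $\phi$ to be the inner automorphism $\phi_g$ for some $g\in G$. To recover $m$ it suffices to recover $g$ and $g^m$ (each up to $Z(G)$), and this is precisely where Theorem A is used: applying $\phi$ to every elementary matrix in the generating set from Theorem A yields the values $\{gxg^{-1}\}$, and running the Gaussian-elimination algorithm of Theorem A in reverse lets one solve for $g$ up to scalars. The same procedure applied to $\phi^m$ produces $g^m$, so recovery of $m$ reduces to the DLP instance $(g,g^m)$ in $G$.

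For the second step, every element of $G$ is conjugate into some maximal torus, and the Coxeter-type torus of $\text{U}(d,q^2)$ is cyclic of order $q^d-(-1)^d$; it sits naturally inside $\mathbb{F}_{q^{2d}}^*$ as the norm-one subgroup under the $q^d$-power Frobenius. Thus DLP in $G$ reduces to DLP in $\mathbb{F}_{q^{2d}}^*$. Conversely, Zsigmondy's theorem guarantees that $q^{2d}-1$ admits a primitive prime divisor, and any such divisor must divide $q^d+1$, so is captured by the Coxeter torus inside $G$; combined with Pohlig--Hellman, this reduces DLP in $\mathbb{F}_{q^{2d}}^*$ to DLP in (a cyclic subgroup of) $G$, completing the equivalence.

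The main obstacle I anticipate is the converse direction of the second reduction, namely showing that the DLP in the Coxeter torus really captures the full hardness of DLP in $\mathbb{F}_{q^{2d}}^*$. This forces a mild genericity condition on the cryptosystem parameter $g$ --- essentially, that the order of $g$ is divisible by a Zsigmondy prime divisor of $q^{2d}-1$ --- which Section~6 must verify is consistent with any secure parameter choice. A secondary technicality is that the Gaussian-elimination recovery of $g$ from $\phi_g$ is only determined modulo the centre $Z(G)\cong\mathcal{K}^1$, but this ambiguity is harmless because the MOR encryption and decryption depend on $g$ only through the inner automorphism $\phi_g$.
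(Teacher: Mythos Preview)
Your first reduction has a real gap. Writing ``running the Gaussian-elimination algorithm of Theorem~A in reverse lets one solve for $g$'' is not an algorithm: Theorem~A takes a \emph{known} matrix and expresses it as a word in the elementary matrices, whereas here $g$ is the unknown and the only data you have are the conjugates $gx_rg^{-1}$ of the generators. Nothing in Theorem~A tells you how to pass from that data back to $g$, and the paper does not use Theorem~A for this step at all. What the paper actually does (Section~6, ``Reduction of security'') is exploit the specific shape of the generators: since $x_{i,-i}(s)=I+se_{i,-i}$, one has $\phi(x_{i,-i}(s))-I=s\,ge_{i,-i}g^{-1}$, a rank-one matrix whose column space is exactly the $i$-th column $G_i$ of $g$. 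Doing this for all $i$ and $-i$ recovers every column up to an individual scalar, i.e.\ recovers $N=gD$ with $D$ diagonal. In a second pass one computes $N^{-1}\phi(x_r(t))N=D^{-1}x_r(t)D$ for various $r$, from which the ratios $d_i^{-1}d_j$ can be read off, pinning $D$ down to a single overall scalar. The odd case $d=2l+1$ requires an extra trick to handle the $0$-th column. None of this is ``Gaussian elimination in reverse''; it is a direct linear-algebra extraction, and it is the substantive content of the proof.

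Your second reduction is more detailed than what the paper does (the paper simply says that by suitably choosing the conjugator its DLP embeds in $\mathbb{F}_{q^{2d}}$ and cites~\cite{Ma}), and your Zsigmondy/Pohlig--Hellman argument for the converse direction is reasonable. But be careful with the sentence ``every element of $G$ is conjugate into some maximal torus'': this is only true for semisimple elements. You would need to argue separately that the unipotent part contributes nothing hard to the DLP (its order is a $p$-power), or else restrict to semisimple~$g$ from the outset.
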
 
\section{Unitary Groups}\label{chevalleygroups}

One of the legendary works of Chevalley~\cite{ch} is a way to construct groups over an arbitrary field from a complex simple Lie algebra. These groups are now called Chevalley groups in his honor. Steinberg~\cite{st3} generalized Chevalley\rq{}s idea and introduced twisted Chevalley groups. These groups are now called \emph{Steinberg groups}. These groups can be constructed in those cases where the Dynkin diagram of the underlying simple Lie algebra has a non-trivial symmetry. In this paper we work with the twisted group of type $^{2}A_{l}$, i.e., unitary groups.

Let $\mathcal{K}$ be a field with a non-trivial field automorphism $\sigma$ of order $2$ with fixed field $k$. 
Let $V$ be a vector space of dimension $d$ over $\mathcal{K}$. We denote the image of $\alpha$ under $\sigma$ by $\bar \alpha$. Let $\beta \colon V\times V \rightarrow \mathcal{K}$ be a non-degenerate Hermitian form,
i.e., bar-linear in the first coordinate and linear in the second coordinate satisfying $\beta(x,y)=\overline{\beta(y,x)}$. We fix a
basis for $V$ and slightly abuse the notation to denote the matrix of $\beta$ by $\beta$. Thus $\beta$ is a non-singular matrix satisfying $\beta=\tra{\bar\beta}$. 
\begin{definition}[Unitary Group]
The unitary group is:
$$\text{U}(d,\mathcal{K})=\{X\in \text{GL}(d,\mathcal{K}) \mid \tr{\bar X}\beta X=\beta\}.$$
The special unitary group $\text{SU}(d,\mathcal{K})$ consists of matrices of $\text{U}(d,\mathcal{K})$ of determinant $1$.
\end{definition}
In this paper we work with split (i.e., maximum Witt index) Hermitian form. Recall that characteristic of $\mathcal{K}$ is odd whenever $d$ is odd.  
 For the convenience of computations we index the basis by
$1,\ldots, l, -1,\ldots,-l $ when $d=2l$ and by $0,1,\ldots,l,
-1,\ldots, -l$ when $d=2l+1$; where $l>1$. We also fix the matrix $\beta$ as follows:
\begin{itemize}
\item $d=2l$ fix $\beta=\begin{pmatrix} & I_l \\ I_l & \end{pmatrix}$
\item $d=2l+1$ fix $\beta = \begin{pmatrix} 2 & & \\ &&I_l \\ &I_l&\end{pmatrix}$.
\end{itemize}
Thus the unitary group obtained with respect to this form is called the split unitary group.

There are two important examples of this split unitary group and subsequently of our algorithm: the field of complex numbers $\mathbb C$ over reals $\mathbb R$ with $\sigma$ the complex conjugation and the other, finite field $\mathbb F_{q^2}$ over $\mathbb F_q$ with sigma being $\alpha\mapsto \alpha^q$. It is known that in both the cases there is only one
non-degenerate Hermitian form up to equivalence~\cite[Corollary
10.4]{gr}. Equivalent Hermitian forms gives rise to conjugate unitary groups.
In the case of a finite field, a unitary group will be denoted by $\text{U}(d,q^2)$ and
special unitary group as $\text{SU}(d,q^2)$. A word of caution: in the literature
$\text{U}(d,q^2)$, $\text{U}(d,\mathbb F_q)$ and
$\text{U}(d,q)$ are used interchangeably. 

\section{Elementary matrices and elementary operations in unitary groups}\label{elementarymatrices}
Solving the word problem in any group is of interest in computational
group theory. In a special linear group, it can be easily solved
using Gaussian elimination. However, for many groups, it is a very
hard problem. In this paper we present a fast, cubic-time solution to
the word problem in unitary groups. 

Gaussian elimination in SL$(d,\mathcal{K})$ uses elementary transvections as the
elementary matrices and row-column operations as elementary
operations. These elementary operations are multiplication by
elementary matrices. The elementary matrices are of the form $I+te_{i,j} \;\;(t\in \mathcal{K})$, where
$e_{i,j}$ is the matrix unit with $1$ in the $(i,j)\textsuperscript{th}$
position and zero elsewhere. 

In the same spirit, one can define
Chevalley-Steinberg generators for the unitary group~\cite[Section 14.5]{ca} as follows:  
\subsection{Elementary matrices for $\text{U}(2l,\mathcal{K})$}\label{generators} 
In what follows, $l\geq 2$.
For $1\leq i,j\leq l$, $t\in\mathcal{K}$ and $s\in \mathcal{K}^o$:
\begin{eqnarray*}
x_{i,j}(t)=&I+te_{i,j}-\bar t e_{-j,-i} &\text{for}\; i\neq j,\\
x_{i,-j}(t)=&I+te_{i,-j}-\bar t e_{j,-i} &\text{for}\;i<j,\\
x_{-i,j}(t)=&I+te_{-i,j}- \bar t e_{-j,i}&\text{for}\; i<j, \\
x_{i,-i}(s)=&I+se_{i,-i},\\
x_{-i,i}(s)=&I+se_{-i,i},\\
\end{eqnarray*}
\subsection{Row-Column operations for $\text{U}(2l,\mathcal{K})$}
Rephrasing the earlier definition in matrix format, we have three kind of elementary matrices. 
\begin{enumerate}
\item[E1:] $\begin{pmatrix}R&\\ & \tra {\bar R}^{-1}\end{pmatrix}$ where $R=I+te_{i,j}$; $i\neq j $.
\item[E2:] $\begin{pmatrix} I& R \\ &I\end{pmatrix}$ where $R$ is either $te_{i,j}-\bar t e_{j,i}$; $i<j$ or $se_{i,i}$.
\item[E3:] $\begin{pmatrix} I&  \\ R &I\end{pmatrix}$ where $R$ is either $te_{i,j}-\bar t e_{j,i}$; $i<j$ or $se_{i,i}$.
\end{enumerate}
Let $g=\begin{pmatrix}A&B\\C&D\end{pmatrix}$ be a $2l\times 2l$ matrix written in block form of size $l\times l$. Note the effect of multiplying $g$ by matrices from above.

\begin{eqnarray*}
ER1:&\begin{pmatrix}R&\\ & \tra {\bar R}^{-1}\end{pmatrix}\begin{pmatrix}A&B\\C&D\end{pmatrix}&=\begin{pmatrix}RA&RB \\ \tra {\bar R}^{-1}C & \tra {\bar R}^{-1} D\end{pmatrix}.\\
EC1:&\begin{pmatrix}A&B\\C&D\end{pmatrix}\begin{pmatrix}R&\\ & \tra {\bar R}^{-1}\end{pmatrix}&=\begin{pmatrix} AR&B\tra {\bar R}^{-1} \\ CR & D\tra {\bar R}^{-1}\end{pmatrix}.
\end{eqnarray*}
\begin{eqnarray*}
ER2: & \begin{pmatrix}I&R\\ &I\end{pmatrix}\begin{pmatrix}A&B\\C&D\end{pmatrix}&=\begin{pmatrix}A+RC&B+RD \\ C & D\end{pmatrix}. \\
EC2: &\begin{pmatrix}A&B\\C&D\end{pmatrix}\begin{pmatrix}I&R\\ &I \end{pmatrix}&=\begin{pmatrix} A&AR+B \\ C & CR+ D\end{pmatrix}.
\end{eqnarray*}
\begin{eqnarray*}
ER3: &\begin{pmatrix}I&\\R &I\end{pmatrix}\begin{pmatrix}A&B\\C&D\end{pmatrix}&=\begin{pmatrix}A&B \\ RA+ C & RB+D\end{pmatrix}.\\ 
EC3: &\begin{pmatrix}A&B\\C&D\end{pmatrix}\begin{pmatrix}I&\\ R&I \end{pmatrix}&=\begin{pmatrix} A+BR&B \\ C+DR & D\end{pmatrix}.
\end{eqnarray*}
\subsection{Elementary matrices for $\text{U}(2l+1,\mathcal{K})$}
For $l\geq 2$, $1\leq i,j\leq l$, $t\in \mathcal{K}$, $s\in\mathcal{K}^o$ and characteristic of $\mathcal{K}$ odd
\begin{eqnarray*}
x_{i,j}(t)=&I+te_{i,j}-\bar t e_{-j,-i} &\text{for}\; i\neq j,\\
x_{i,-j}(t)=&I+te_{i,-j}-\bar t e_{j,-i} &\text{for}\;i<j,\\
x_{-i,j}(t)=&I+te_{-i,j}- \bar t e_{-j,i}&\text{for}\; i<j, \\
x_{i,-i}(s)=&I+se_{i,-i},\\
x_{-i,i}(s)=&I+se_{-i,i},\\
x_{i,0}(t)=&I-2\bar te_{i,0}+ te_{0,-i}-t\bar te_{i,-i},\\
x_{0,i}(t)=&I+te_{0,i}-2\bar te_{-i,0}-t\bar te_{-i,i},\\
\end{eqnarray*}

\subsection{Row-Column operations for $\text{U}(2l+1,\mathcal{K})$}
Rephrasing in matrix format:
\begin{enumerate}
\item[E1:] $\begin{pmatrix}1&&\\ &R&\\ && \tra \bar R^{-1}\end{pmatrix}$ where $R=I+te_{i,j}$; $i\neq j$.
\item[E2:] $\begin{pmatrix} 1&&\\ &I& R \\ &&I\end{pmatrix}$ where  $R$ is either $te_{i,j}-\bar te_{j,i}$; $i < j$ or $se_{i,i}$.
\item[E3:] $\begin{pmatrix} 1&&\\ &I&  \\ &R &I\end{pmatrix}$ where
  $R$ is either $te_{i,j}-\bar te_{j,i}$; $i < j$ or $se_{i,i}$.
\item[E4:] $\left\{\begin{array}{c}
\begin{pmatrix} 1&&R\\-2\bar{R} &I&-\tr R\bar{R}  \\ &   &I\end{pmatrix}\ \text{where} R=te_i \\
\begin{pmatrix} 1&R&\\&I&  \\-2\bar{R} &-\tr R\bar{R} &I\end{pmatrix}\ \text{where}\ R=te_i
\end{array}\right.$
\end{enumerate}
Here $e_i$ is the row vector with $1$ at $i^{\text{th}}$ place and zero elsewhere.
Let $g=\begin{pmatrix}\alpha&X&Y\\ E& A&B\\F&C&D\end{pmatrix}$ be a $(2l+1)\times (2l+1)$ matrix where $A,B,C,D$ are $l\times l$ matrices. The matrices $X=(X_1,X_2,\ldots,X_l)$, $Y=(Y_1,Y_2,\ldots,Y_l)$, $E=\tra(E_1,E_2,\ldots,E_l)$ and $F=\tra(F_1,F_2,\ldots,F_l)$ are rows of length $l$. Furthermore $\alpha\in\mathcal{K}$. Note the effect of multiplication by elementary matrices from above is as follows:
\begin{eqnarray*}
ER1:&\begin{pmatrix}1&&\\ &R&\\ && \tra {\bar R}^{-1}\end{pmatrix}\begin{pmatrix}\alpha & X&Y\\E& A&B\\F& C&D\end{pmatrix}&=\begin{pmatrix}\alpha &X&Y\\ RE&RA&RB \\ \tra {\bar R}^{-1}F&\tra {\bar R}^{-1}C & \tra {\bar R}^{-1} D\end{pmatrix}.\\
EC1:&\begin{pmatrix}\alpha & X&Y\\ E&A&B\\F&C&D\end{pmatrix}\begin{pmatrix}1&&\\ &R&\\ && \tra {\bar R}^{-1}\end{pmatrix}&=\begin{pmatrix} \alpha& XR & Y\tra {\bar R}^{-1}\\ E&AR&B\tra{\bar R}^{-1} \\ F& CR & D\tra{\bar R}^{-1}\end{pmatrix}.
\end{eqnarray*}
\begin{eqnarray*}
ER2: & \begin{pmatrix}1&&\\ &I&R\\ &&I\end{pmatrix}\begin{pmatrix}\alpha &X&Y\\ E&A&B\\F&C&D\end{pmatrix}&=\begin{pmatrix}\alpha &X&Y\\ E+RF& A+RC&B+RD \\ F& C & D\end{pmatrix}. \\
EC2:&\begin{pmatrix}\alpha& X&Y\\ E& A&B\\F& C&D\end{pmatrix}\begin{pmatrix}1&&\\ &I&R\\ &&I \end{pmatrix}&=\begin{pmatrix} \alpha& X& XR+Y\\ E&A&AR+B \\ F& C & CR+ D\end{pmatrix}.
\end{eqnarray*}
\begin{eqnarray*}
ER3: &\begin{pmatrix}1&&\\ &I&\\&R &I\end{pmatrix}\begin{pmatrix}\alpha & X&Y\\ E&A&B\\F&C&D\end{pmatrix}&=\begin{pmatrix}\alpha&X&Y\\ E&A&B \\ RE+F&RA+ C & RB+D\end{pmatrix}.\\ 
EC3:&\begin{pmatrix}\alpha& X&Y\\ E&A&B\\F&C&D\end{pmatrix}\begin{pmatrix}1&&\\ &I&\\ &R&I \end{pmatrix}&=\begin{pmatrix}\alpha& X+YR&Y\\E& A+BR&B \\ F&C+DR & D\end{pmatrix}.
\end{eqnarray*}
For E4 we only write the equations that we need later.
\begin{itemize}
\item Let the matrix $g$ has $C=\diag(d_1,\ldots,d_l)$.
$$ER4:\;[(I+te_{0,-i}-2\bar te_{i,0}-t\bar te_{i,-i})g]_{0,i}= X_i+td_i $$
$$EC4:\;[g(I+te_{0,-i}-2\bar te_{i,0}-t\bar te_{i,-i})]_{-i,0}= F_i-2\bar td_i.$$
\item Let the matrix $g$ has $A=\diag(d_1,\ldots,d_l)$.
$$ER4:\;[(I+te_{0,i}-2\bar te_{-i,0}-t\bar te_{-i,i})g]_{0,i}= X_i+td_i $$
$$EC4:\;[g(I+te_{0,-i}-2\bar te_{i,0}-t\bar te_{i,-i})]_{i,0}= E_i-2\bar td_i.$$
\end{itemize}

\subsection{Row-interchange matrices}
We need certain row interchange matrices, multiplication with these matrices from left,  interchanges $i^{th}$ row with $-i^{th}$ row for $1\leq i\leq l$. These are certain Weyl group elements. These matrices can be produced as follows: for $s\in\mathcal{K}^o$,
$$w_{i,-i}(s)=x_{i,-i}(s)x_{-i,i}(-s^{-1})x_{i,-i}(s)=I+se_{i,-i}-e_{i,i}-s^{-1}e_{-i,i}-e_{-i,-i}.$$
Note that our row interchange multiplies one row by $s$ and the other by $-s^{-1}$ and then swaps them. This scalar multiplication of rows produce no problem for our cause.
\section{Gaussian elimination in Unitary Group}\label{wordproblem}
Now we present the main result of this paper, two algorithms, one for even-order unitary groups and other for the odd-order unitary groups.

\subsection{The algorithm for even-order unitary groups}
\begin{itemize}
\item[Step 1:]
\textbf{Input}: Matrix $g=\begin{pmatrix}A&B\\C&D\end{pmatrix}$ which belongs to $\text{U}(2l,\mathcal{K})$. 

\noindent\textbf{Output}: Matrix $g_1=\begin{pmatrix}A_1&B_1\\C_1&D_1\end{pmatrix}$ which is one of the following kind: 
\begin{enumerate} 
\item[a:] The matrix $A_1$ is $\diag(1,1,\ldots, 1, \lambda)$ with $\lambda\neq 0$ and $C_1$ is $\begin{pmatrix} C_{11}& -\lambda \tra {\bar C}_{21}\\ C_{21}&c\end{pmatrix}$ where $C_{11}$ is a skew-Hermitian matrix of size $l-1$ and $\bar \lambda c\in\mathcal{K}^o$. 
\item[b:] The matrix $A_1$ is $\diag(1,1,\ldots,1,0,\ldots,0)$ with
  number of $1$s equal to $m<l$ and $C_1$ is of the form
  $\begin{pmatrix}C_{11}&0\\ C_{21}& C_{22}\end{pmatrix}$ where
  $C_{11}$ is an $m\times m$ skew-Hermitian matrix.
\end{enumerate}

\noindent{\textbf{Justification}}: Observe that the effect of ER1 and EC1 on $A$ is the
usual row-column operations on a $l\times l$ matrix. Thus we can reduce $A$ to the diagonal form using classical Gaussian elimination algorithm 
and Corollary~\ref{corA} makes sure that $C$ has the required form.

\item[Step 2:]
\textbf{Input:} Matrix $g_1=\begin{pmatrix}A_1&B_1\\C_1&D_1\end{pmatrix}$.

\noindent\textbf{Output:} Matrix $g_2=\begin{pmatrix}A_2&B_2\\0&\tra {\bar A}_2^{-1}\end{pmatrix}$; $A_2$ is $\diag(1,1,\ldots,1,\lambda)$.

\noindent\textbf{Justification}: Observe the effect of ER3. It changes $C_1$ by $C_1+RA_1$. 
Using Lemma~\ref{lemmaC} we can make the matrix $C_1$ the zero matrix in the first case and $C_{11}$ the zero matrix in the second case. After that we make use of row-interchange matrices to interchange the rows so that we get zero matrix at the place of $C_1$. If required use ER1 and EC1 to make $A_1$ a diagonal matrix, say $A_2$. Lemma~\ref{lemmaB} ensures that $D_1$ becomes $\tra\bar A^{-1}_2$. 

\item[Step 3:] 
\textbf{Input:}  Matrix $g_2=\begin{pmatrix}A_2&B_2\\0& \bar
  A_2^{-1}\end{pmatrix}$; $A_2$ is $\diag(1,\ldots,1,\lambda)$.

\noindent\textbf{Output:} Matrix $g_3=\begin{pmatrix}A_2&0\\0&\bar A_2^{-1}\end{pmatrix}$; $A_2$ is $\diag(1,1,\ldots,1,\lambda)$.

\noindent\textbf{Justification}: Using Corollary~\ref{corB} we see that the
matrix $B_2$ has certain form. We can use ER2 to make the matrix
$B_2$ a zero matrix because of Lemma~\ref{lemmaC}.
\end{itemize}

\subsection{The Algorithm for odd-order unitary groups} Recall that for odd-order unitary groups characteristic of $\mathcal{K}$ is
 odd. The algorithm is as follows: 
\begin{itemize}
\item[Step 1:] 
\textbf{Input:} Matrix $g=\begin{pmatrix}\alpha&X&Y\\ E&A&B\\F&C&D\end{pmatrix}$ which belongs to $\text{U}(2l+1,\mathcal{K})$; 

\noindent\textbf{Output:} Matrix $g_1=\begin{pmatrix}\alpha & X_1 &Y_1 \\ E_1& A_1 &B_1\\ F_1& C_1&D_1\end{pmatrix}$ of one of the following kind:
\begin{enumerate} 
\item[a:] Matrix $A_1$ is a diagonal matrix $\diag(1,\ldots, 1, \lambda)$ with $\lambda\neq 0$. 
\item[b:] Matrix $A_1$ is a diagonal matrix $\diag(1,\ldots,1,0,\ldots,0)$ with number of $1$s equal to $m$ and $m<l$.
\end{enumerate} 

\noindent\textbf{Justification}: Using ER1 and EC1 we can do row and column operations on $A$ and get the required form.
 
\item[Step 2:] 
\textbf{Input:} Matrix $g_1=\begin{pmatrix}\alpha & X_1 & Y_1 \\ E_1& A_1 &B_1\\ F_1& C_1&D_1 \end{pmatrix}$. 

\noindent\textbf{Output:} Matrix $g_2=\begin{pmatrix} \alpha_2 & 0 & Y_2 \\ E_2& A_2 &B_2\\ F_2& C_2& D_2\end{pmatrix}$ of one of the following kind:    
\begin{enumerate} 
\item[a:] Matrix $A_2$ is $\diag(1,1,\ldots, 1, \lambda)$ with $\lambda\neq 0$, $E_2=0$ and $C_2$ is of the form $\begin{pmatrix} C_{11}& -\lambda \tra{\bar C}_{21}\\ C_{21}&c\end{pmatrix}$ where $C_{11}$ is skew-Hermitian of size $l-1$ and $\bar \lambda c\in \mathcal{K}^o$. 
\item[b:] Matrix $A_2$ is $\diag(1,\ldots,1,0,\ldots,0)$ with number of $1$s equal to $m$; $E_2$ has first $m$ entries $0$, and $C_2$ is of the form $\begin{pmatrix}C_{11}&0\\ C_{21}& C_{22}\end{pmatrix}$ where $C_{11}$ is an $m\times m$ skew-Hermitian.
\end{enumerate}
 
\noindent\textbf{Justification}: Once we have $A_1$ in diagonal form we use ER4 to change $X_1$ and EC4 to change $E_1$. In the first case these can be made $0$, however in the second case we can only make first $m$ entries zero. Then Lemma~\ref{lemmaF} makes sure that $C_1$ has the required form, call it $C_2$.

\item[Step 3:]
\textbf{Input:} Matrix $g_2=\begin{pmatrix} \alpha_2 & 0 & Y_2 \\ E_2& A_2 &B_2\\ F_2& C_2& D_2\end{pmatrix}$.

\noindent\textbf{Output:} 
\begin{enumerate} 
\item[a:] Matrix $g_3=\begin{pmatrix}\alpha_3 & 0 & Y_3 \\ 0& A_3 &B_3\\ F_3& 0 & D_3 \end{pmatrix}$ where $A_3$ is $\diag(1,1,\ldots, 1, \lambda)$. 
\item[b:] Matrix $g_3=\begin{pmatrix}\alpha_3 & 0 & Y_3 \\ E_3 & A_3 &B_3\\ F_3&  0 & D_3 \end{pmatrix}$.
\end{enumerate}
\noindent\textbf{Justification}: Observe the effect of ER3 and EC4. Then Lemma~\ref{lemmaC} ensures that $C_3$ is zero in the first case. In the second case, it only makes first $m$ rows of $C_3$ zero. Thus we interchange remaining rows of $C_3$ with $A_3$ to get the desired result. 
\item[Step 4:] \textbf{Input:} Matrix $g_3=\begin{pmatrix}\alpha_3 & 0 & Y_3 \\ E_3& A_3 &B_3\\ F_3& 0 & D_3 \end{pmatrix}$.

\noindent\textbf{Output:} Matrix $g_4=\begin{pmatrix} \alpha_4 &0&0\\ 0&A_4& B_4 \\0&0&\tra{\bar A}_4^{-1}\end{pmatrix}$  with $A_4=\diag(1,\ldots,1,\lambda)$ and $\alpha_4 \in\mathcal{K}^1$. 

\noindent\textbf{Justification}: In the first case we already have $E_3=0$ thus Lemma~\ref{lemmaG} gives the  desired result. In the second case, if needed we use ER1 and EC1 on $A_3$ to make it a diagonal. Lemma~\ref{lemmaI} ensures that $A_3$ has the full rank. Furthermore, we can use $ER4$ and $EC4$ to make $E_3=0$. Then again Lemma~\ref{lemmaG} gives the required form.
\item[Step 5:] \textbf{Input:} Matrix $g_4=\begin{pmatrix} \alpha_4 &0&0\\ 0&A_4&B_4\\0&0&\bar A_4^{-1}\end{pmatrix}$  with $A_4=\diag(1,\ldots,1,\lambda)$ and $\alpha_4\in \mathcal{K}^1$.

\noindent\textbf{Output:} Matrix $g_5=\diag(\alpha, 1\ldots, 1, \lambda, 1, \ldots, 1,\bar\lambda^{-1})$ where $\alpha\in\mathcal{K}^1$. 

\noindent\textbf{Justification}: Lemma~\ref{lemmaH} ensures that $B_4$ is of a certain kind. We can use ER2 to make $B_4=0$.
\end{itemize}
\subsection{Proof of Theorem A}
\begin{proof}
Let $g\in \text{U}(d,\mathcal{K})$. Using the Gaussian elimination above we
can reduce $g$ to a matrix of the form
$\diag(1,\ldots,1,\lambda,1,\ldots,1,\bar\lambda^{-1})$ when $d=2l$
and $\diag(\alpha, 1\ldots, 1, \lambda, 1, \ldots,
1,\bar\lambda^{-1})$ when $d=2l+1$. We further note that row-column operations are multiplication by elementary matrices from left or right and each of these elementary matrices have determinant one. Thus we get the required result.
\end{proof}

\subsection{Asymptotic complexity is $O(l^3)$} In this section, we show that the
asymptotic complexity of the algorithm that we developed is
$O(l^3)$. We count the number of field multiplications. We can break
the algorithm into three parts. One, reduce $A$ to a diagonal, then
deal with $C$ and then with $D$. It is easy to see that reducing $A$
to the diagonal has complexity $O(l^3)$ and dealing with $C$ and $D$ has complexity
$O(l^3)$. Row interchange has complexity $O(l^2)$.
In the odd-order case there is a complexity of $O(l)$ to
deal with $X,Y,E$ and $F$. In all, the worst case complexity is $O(l^3)$.

\section{A few technical lemmas}
To justify our algorithms we need few lemmas. Many of
these lemmas could be known to an expert. However,  we
include them for the convenience of a reader.

\begin{lemma}\label{lemmaA}
Let $Y=\diag(1,\ldots,1,\lambda,\ldots,\lambda)$ of size $l$ with number of $1$s equal to $m<l$. Let $X$ be a matrix such that $YX$ is skew-Hermitian then $X$ is of the form $\begin{pmatrix}X_{11}& -\bar\lambda \tr {\bar X}_{21}\\ X_{21}&X_{22}\end{pmatrix}$ where $X_{11}$ is skew-Hermitian and so is $\lambda X_{22}$ if $\lambda\neq 0$.
\end{lemma}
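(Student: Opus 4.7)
The plan is to prove the lemma by a direct block computation, partitioning $X$ in the same $m+(l-m)$ block pattern as $Y$ and reading off the skew-Hermitian condition one block at a time.

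First I would write $Y=\begin{pmatrix} I_m & 0 \\ 0 & \lambda I_{l-m}\end{pmatrix}$ and $X=\begin{pmatrix} X_{11} & X_{12} \\ X_{21} & X_{22}\end{pmatrix}$ with $X_{11}$ of size $m\times m$, $X_{22}$ of size $(l-m)\times(l-m)$, and $X_{12}$, $X_{21}$ of the appropriate rectangular sizes. Then multiplying gives
\[
YX=\begin{pmatrix} X_{11} & X_{12} \\ \lambda X_{21} & \lambda X_{22}\end{pmatrix}.
\]

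Next I would impose the skew-Hermitian condition $\tra{\overline{YX}}=-YX$ block by block. The $(1,1)$ block yields $\tra{\bar X}_{11}=-X_{11}$, so $X_{11}$ is skew-Hermitian. Comparing the off-diagonal blocks of $\tra{\overline{YX}}$ and $-YX$, the $(1,2)$ block of $\tra{\overline{YX}}$ equals $\tra{\overline{\lambda X_{21}}}=\bar\lambda\,\tra{\bar X}_{21}$, which must equal $-X_{12}$; hence $X_{12}=-\bar\lambda\,\tra{\bar X}_{21}$, matching the desired form. Finally, the $(2,2)$ block gives $\bar\lambda\,\tra{\bar X}_{22}=-\lambda X_{22}$, i.e.\ $\tra{\overline{\lambda X_{22}}}=-\lambda X_{22}$, so $\lambda X_{22}$ is skew-Hermitian whenever $\lambda\neq 0$.

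There is no real obstacle here; the only thing to be careful about is tracking the conjugation through the scalar $\lambda$ in the $(1,2)$ and $(2,2)$ blocks, since the transpose-conjugate converts $\lambda$ to $\bar\lambda$, which is precisely what produces the factor $-\bar\lambda$ in the description of $X_{12}$. Since every implication above is an equivalence, the block conditions derived are not only necessary but also describe exactly when $YX$ is skew-Hermitian, which is all the lemma claims.
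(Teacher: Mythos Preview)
Your argument is correct and follows exactly the same route as the paper: partition $X$ into blocks compatible with $Y$, compute $YX$ blockwise, and read off the skew-Hermitian condition on each block. The paper's proof is just a terser version of what you wrote.
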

\begin{proof}
We observe that the matrix $YX=\begin{pmatrix}X_{11}& X_{12}\\\lambda X_{21}&\lambda X_{22}\end{pmatrix}$. The condition that $YX$ is skew-Hermitian implies $X_{11}$ (and $X_{22}$ if $\lambda\neq 0$) is skew-Hermitian and $X_{12}=-\bar\lambda \tra {\bar X}_{21}$. 
\end{proof}
\begin{corollary}\label{corA}
Let $g=\begin{pmatrix}A&B\\C&D\end{pmatrix}$ be in $\text{U}(2l,\mathcal{K})$.
\begin{enumerate}
\item If $A$ is a diagonal matrix $\diag(1,1,\ldots,1,\lambda)$ with $\lambda\neq 0$ then the matrix $C$ is of the form $\begin{pmatrix}C_{11}& -\lambda \tr {\bar C}_{21}\\ C_{21}& c\end{pmatrix}$ where $C_{11}$ is an $(l-1)\times (l-1)$ skew-Hermitian and $\bar \lambda c\in\mathcal{K}^o$.
\item  If $A$ is a diagonal matrix $\diag(1,1,\ldots,1,0,\ldots,0)$ with number of $1$s equal to $m<l$ then the matrix $C$ is of the form $\begin{pmatrix}C_{11}&0\\ C_{21}& C_{22}\end{pmatrix}$ where $C_{11}$ is an $m\times m$ skew-Hermitian.
\end{enumerate}
\end{corollary}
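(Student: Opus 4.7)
The plan is to extract the required structure on $C$ directly from the defining relation $\tra{\bar g}\beta g=\beta$ of the unitary group, then invoke Lemma~\ref{lemmaA}.

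First I would write the unitary condition in block form. With $\beta=\begin{pmatrix}0&I_l\\ I_l&0\end{pmatrix}$ and $g=\begin{pmatrix}A&B\\ C&D\end{pmatrix}$, a direct computation of $\tra{\bar g}\beta g$ yields the four block equations
\begin{align*}
\tra{\bar A}C+\tra{\bar C}A&=0, & \tra{\bar A}D+\tra{\bar C}B&=I,\\
\tra{\bar B}C+\tra{\bar D}A&=I, & \tra{\bar B}D+\tra{\bar D}B&=0.
\end{align*}
Only the first will be used: it says $\tra{\bar A}C$ is skew-Hermitian.

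Next, I would specialize to the two cases by setting $Y:=\tra{\bar A}$, which is diagonal because $A$ is. In case (a) we have $Y=\diag(1,\ldots,1,\bar\lambda)$; in case (b) we have $Y=\diag(1,\ldots,1,0,\ldots,0)$ with $m$ ones. In either situation the hypothesis of Lemma~\ref{lemmaA} is met with $X=C$, so Lemma~\ref{lemmaA} gives the block decomposition of $C$ asserted in the corollary. In case (a) the $(1,2)$-block comes out as $-\overline{\bar\lambda}\,\tra{\bar C}_{21}=-\lambda\,\tra{\bar C}_{21}$, matching the stated shape; and the $1\times 1$ lower-right block $C_{22}=c$ satisfies the requirement that $\bar\lambda c$ lies in $\mathcal{K}^o$, since Lemma~\ref{lemmaA} forces $\bar\lambda c$ to be skew-Hermitian (which for a scalar is exactly membership in $\mathcal{K}^o$). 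In case (b) the $\lambda=0$ instance of Lemma~\ref{lemmaA} kills the top-right block and imposes no extra condition on $C_{22}$, as claimed.

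The only mild care needed is the bookkeeping between $\lambda$ in Lemma~\ref{lemmaA} and $\bar\lambda$ arising from $Y=\tra{\bar A}$, and the fact that the Hermitian form we have fixed puts the identity in the anti-diagonal blocks of $\beta$ (so $\tra{\bar g}\beta g=\beta$ swaps the row blocks of $g$ before multiplying). There is no serious obstacle: once the block equation $\tra{\bar A}C+\tra{\bar C}A=0$ is isolated, the corollary is an immediate translation of Lemma~\ref{lemmaA} applied to the diagonal matrix $Y=\tra{\bar A}$.
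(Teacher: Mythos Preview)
Your proposal is correct and follows essentially the same route as the paper: compute $\tra{\bar g}\beta g=\beta$ in block form, read off $\tra{\bar A}C+\tra{\bar C}A=0$ (equivalently $\bar A C$ is skew-Hermitian, since $A$ is diagonal), and apply Lemma~\ref{lemmaA} with $Y=\bar A$. Your extra bookkeeping on the $\lambda$ versus $\bar\lambda$ swap is exactly the detail the paper leaves implicit.
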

\begin{proof}
We use the condition that $g$ satisfies $\tra {\bar g}\beta g=\beta$.
\begin{eqnarray*}
\tra {\bar g} \beta g &=& \begin{pmatrix}\tr {\bar A}&\tra {\bar C}\\\tra {\bar B}&\tra {\bar D}\end{pmatrix} \begin{pmatrix}&I\\ I&\end{pmatrix} \begin{pmatrix}A&B\\C&D\end{pmatrix}\\
&=& \begin{pmatrix} \tra {\bar C}&\tr {\bar A}\\ \tra {\bar D}&\tra{\bar B}\end{pmatrix}\begin{pmatrix}A&B\\C&D\end{pmatrix} =\begin{pmatrix} \tra {\bar C}A+\tr {\bar A}C&*\\ *&*\end{pmatrix}
\end{eqnarray*}
This gives $\tra{\bar C}A+\tr{\bar A}C=0$ which means $\bar AC$ is skew-Hermitian (note $A=\tra A$ as $A$ is diagonal). The Lemma~\ref{lemmaA} gives the required form for $C$.
\end{proof}
\begin{corollary}\label{corB}
Let $g=\begin{pmatrix}A&B\\0&\bar A^{-1}\end{pmatrix}$ where $A=\diag(1,\ldots,1,\lambda)$ be an element of $\text{U}(2l,\mathcal{K})$ then the matrix $B$ is of the form  $\begin{pmatrix}B_{11}&-\lambda^{-1} \tra\bar B_{21}\\ B_{21}& b\end{pmatrix}$ where $B_{11}$ is a skew-Hermitian matrix of size $l-1$ and $\bar \lambda^{-1} b\in\mathcal{K}^o$.
\end{corollary}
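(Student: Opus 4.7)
The plan is to mirror the proof of Corollary~\ref{corA}, but extract information from the $(2,2)$ block of the unitarity equation rather than the $(1,1)$ block. The input $g$ is block upper triangular with the bottom-right block equal to $\bar A^{-1}$, so most of the relations coming from $\tra{\bar g}\beta g=\beta$ will be automatic, and the single nontrivial block will pin down the shape of $B$.

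First I would compute $\tra{\bar g}\beta g$ directly. Since $A$ is diagonal, $\tra{\bar A}=\bar A$ and $\tra{A^{-1}}=A^{-1}$, so
\[
\tra{\bar g}\beta g=\begin{pmatrix}\bar A & 0\\ \tra{\bar B} & A^{-1}\end{pmatrix}\begin{pmatrix}0 & \bar A^{-1}\\ A & B\end{pmatrix}=\begin{pmatrix}0 & I\\ I & A^{-1}B+\tra{\bar B}\bar A^{-1}\end{pmatrix}.
\]
The off-diagonal blocks are automatically correct, and the $(1,1)$ block vanishes because $C=0$. Setting the right-hand side equal to $\beta$ thus reduces the full unitarity condition to the single block equation
\[
A^{-1}B+\tra{\bar B}\bar A^{-1}=0.
\]
The key observation is that, because $A^{-1}$ is diagonal, the second term is exactly $\tra{\overline{A^{-1}B}}$; hence the equation says precisely that $A^{-1}B$ is skew-Hermitian.

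Second, I would feed this into Lemma~\ref{lemmaA} with $Y=A^{-1}=\diag(1,\ldots,1,\lambda^{-1})$ and $X=B$. This is the same lemma that was used in Corollary~\ref{corA}; only the role of the nontrivial diagonal entry has changed from $\bar\lambda$ (arising from $\bar A$) to $\lambda^{-1}$ (arising from $A^{-1}$). Substituting this into the conclusion of the lemma produces the claimed block form of $B$: $B_{11}$ is skew-Hermitian of size $l-1$, the off-diagonal vectors $B_{12}$ and $B_{21}$ are related by the stated conjugate-transpose formula, and the scalar $b$ is forced into the stated skew set.

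There is no real obstacle here; the entire argument is a one-block computation feeding into the already established Lemma~\ref{lemmaA}. The only item requiring care is keeping the bar-and-inverse bookkeeping straight when translating the lemma's $\lambda$ into the present $\lambda^{-1}$, so that the final coefficients on $\tra{\bar B}_{21}$ and $b$ come out correctly.
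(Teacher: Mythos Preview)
Your approach is correct and is exactly what the paper's one-line proof intends: use $\tra{\bar g}\beta g=\beta$ together with $A=\tra A$ (since $A$ is diagonal), read off from the $(2,2)$ block that $A^{-1}B$ is skew-Hermitian, and then feed this into Lemma~\ref{lemmaA} with $Y=A^{-1}=\diag(1,\ldots,1,\lambda^{-1})$. One small bookkeeping point: carrying Lemma~\ref{lemmaA} through with this $Y$ actually yields the off-diagonal coefficient $-\bar\lambda^{-1}$ (consistent with the parallel Lemma~\ref{lemmaH}), so the $-\lambda^{-1}$ printed in the statement appears to be a typographical slip rather than a discrepancy in your argument.
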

\begin{proof}
Yet again, we use the condition that $g$ satisfies $\tra {\bar g}\beta g=\beta$ and $A=\tr A$.
\end{proof}
 
\begin{lemma}\label{lemmaB}
 A matrix $\begin{pmatrix} A &B\\0 &D   \end{pmatrix}$ belongs to $\text{U}(2l,\mathcal{K})$ if and only if $D=\tr{\bar A}^{-1}$ and $A^{-1}B$ is skew-Hermitian.
\end{lemma}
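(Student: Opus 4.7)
The plan is to simply expand the block-matrix defining condition $\tra{\bar g}\beta g = \beta$ with $\beta = \begin{pmatrix} 0 & I \\ I & 0 \end{pmatrix}$, and read off both implications at once. This is exactly the same computational pattern used in Corollary~\ref{corA} and Corollary~\ref{corB}, so no new idea is required; the task is just to bookkeep the transpose-conjugate carefully.

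First I would write $\tra{\bar g} = \begin{pmatrix} \tra{\bar A} & 0 \\ \tra{\bar B} & \tra{\bar D}\end{pmatrix}$ and compute
\[
\tra{\bar g}\,\beta\, g \;=\; \begin{pmatrix} 0 & \tra{\bar A} \\ \tra{\bar D} & \tra{\bar B}\end{pmatrix}\begin{pmatrix} A & B \\ 0 & D\end{pmatrix} \;=\; \begin{pmatrix} 0 & \tra{\bar A}\,D \\ \tra{\bar D}\,A & \tra{\bar D}\,B + \tra{\bar B}\,D\end{pmatrix}.
\]
Setting this equal to $\beta$ yields two independent conditions: $\tra{\bar A}\,D = I$ (the off-diagonal block; the other off-diagonal block gives the equivalent statement $\tra{\bar D}A=I$) and $\tra{\bar D}B + \tra{\bar B}D = 0$ (the lower-right block). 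The first gives exactly $D = \tra{\bar A}^{-1}$, which in particular forces $A$ to be invertible.

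For the second condition, I would substitute $D = \tra{\bar A}^{-1}$, so $\tra{\bar D} = A^{-1}$. Then $\tra{\bar D}B = A^{-1}B$, while $\tra{\bar B}D = \tra{\bar B}\,\tra{\bar A}^{-1} = \tra{\,\overline{A^{-1}B}\,}$. Hence the condition becomes
\[
A^{-1}B + \tra{\,\overline{A^{-1}B}\,} = 0,
\]
which is precisely the statement that $A^{-1}B$ is skew-Hermitian. The converse direction is immediate: if $D = \tra{\bar A}^{-1}$ and $A^{-1}B$ is skew-Hermitian, then reversing the computation above shows $\tra{\bar g}\beta g = \beta$, so $g \in \text{U}(2l,\mathcal{K})$.

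There is no real obstacle here; the only thing to be careful about is the conversion $\tra{\bar B}\,\tra{\bar A}^{-1} = \tra{\,\overline{A^{-1}B}\,}$, which relies on the facts that $\sigma$ commutes with both transpose and inverse. Everything else is a direct block multiplication.
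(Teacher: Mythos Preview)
Your computation is correct and is exactly the ``simple computation'' the paper has in mind: expand $\tra{\bar g}\beta g=\beta$ with the given block-triangular $g$, read off $\tra{\bar A}D=I$ and $\tra{\bar D}B+\tra{\bar B}D=0$, and rewrite the latter as skew-Hermitianness of $A^{-1}B$. There is no difference in approach to comment on.
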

\begin{proof}
 The proof is simple computation.
\end{proof}
 
 \begin{lemma}\label{lemmaC}
Let $Y=\diag(1,1,\ldots,1,\bar \lambda)$ be of size $l$ where $\lambda\neq 0$ and $X=(x_{ij})$ be a matrix such that $YX$ is skew-Hermitian. Then $X=(R_1+R_2+\ldots)Y$ where each $R_m$ is of the form $te_{i,j}-\bar te_{j,i}$ with $t\in \mathcal{K}$ for some $i < j$ or of the form $se_{i,i}$ with $s\in\mathcal{K}^o$ for some $i$. 
\end{lemma}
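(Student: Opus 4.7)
The plan is to translate the hypothesis that $YX$ is skew-Hermitian into entrywise conditions on $X$, construct a candidate matrix $R$ by rescaling the last column of $X$, verify that $R$ is skew-Hermitian in the standard sense, and then decompose $R$ entry by entry into the basic generators specified in the statement.

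First, since $Y$ is diagonal with only its last entry, $\bar\lambda$, differing from $1$, left multiplication by $Y$ only rescales the last row of $X$ by $\bar\lambda$. Expanding the skew-Hermitian condition on $YX$ entrywise therefore produces three kinds of relations on $X=(x_{ij})$: (i) the upper-left $(l-1)\times(l-1)$ block satisfies $x_{ij}+\overline{x_{ji}}=0$ and is already skew-Hermitian; (ii) for each $i<l$ the cross relation $x_{il}=-\lambda\,\overline{x_{li}}$ holds; and (iii) the corner entry obeys $\bar\lambda\,x_{ll}\in\mathcal{K}^o$. This already strongly constrains $X$: modulo a $\lambda$-twist of the last column and of the corner, $X$ is a skew-Hermitian matrix.

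Next, I would rescale the last column of $X$ by the inverse of the appropriate conjugate of $\lambda$ to form $R$, and verify entrywise that $R$ is skew-Hermitian using (i)--(iii): relation (i) is inherited unchanged; relation (ii) combined with the rescaling turns the pair $(R_{il},R_{li})$ into $R_{il}=-\overline{R_{li}}$; and relation (iii), after dividing through by $\lambda\bar\lambda$, forces $R_{ll}\in\mathcal{K}^o$. With $R$ skew-Hermitian, the decomposition into the specified generators is combinatorial: for each pair $i<j$, set $R_m=R_{ij}e_{i,j}-\overline{R_{ij}}e_{j,i}$, which is of the first form; for each $i$, set $R_m=R_{ii}e_{i,i}$, which is admissible because $R_{ii}\in\mathcal{K}^o$. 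Summing all of these $R_m$ gives $R$, and right-multiplying by $Y$ recovers $X$.

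The main obstacle is the scaling bookkeeping in the middle step: identifying which conjugate of $\lambda$ must divide the last column of $X$ so that the $\lambda$-twisted cross relation (ii) becomes the standard skew-Hermitian relation $R_{il}=-\overline{R_{li}}$, and confirming that the $\mathcal{K}^o$-condition on $\bar\lambda\,x_{ll}$ survives the division in the form $R_{ll}\in\mathcal{K}^o$. Once that normalization is pinned down, both the entrywise skew-Hermitian verification and the final decomposition into generators are entirely routine.
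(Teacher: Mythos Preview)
Your proposal is correct and follows essentially the same approach as the paper. The paper invokes Lemma~\ref{lemmaA} to obtain exactly your conditions (i)--(iii) on the block structure of $X$, and then simply asserts that any such matrix is a sum of matrices of the form $R_mY$; you spell this last step out by forming $R=XY^{-1}$, checking it is skew-Hermitian, and decomposing it entrywise into the prescribed generators, which is precisely the content of the paper's ``clearly.''
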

\begin{proof}
Since $YX$ is skew-Hermitian, the matrix $X$ is of the following form (see Lemma~\ref{lemmaA}): 
$\begin{pmatrix}X_{11}&X_{12}\\ X_{21} & x\end{pmatrix}$ where $X_{11}$ is skew-Hermitian of size $(l-1)\times (l-1)$ and  $X_{21}$ is a row of size $l-1$ and $X_{12} = -\lambda \tr {\bar X}_{21}$ and $x=\lambda \frac{x}{\lambda}$ a scalar satisfying $\bar \lambda x\in \mathcal{K}^o$. Clearly any such matrix is sum of the matrices of the form $R_mY$.    
\end{proof}

\begin{lemma}\label{lemmaF}
Let $g=\begin{pmatrix}\alpha&X&*\\ *&A&*\\ *&C&*\end{pmatrix}$ be in $\text{U}(2l+1,\mathcal{K})$.
\begin{enumerate}
\item If $A=\diag(1,\ldots,1,\lambda)$ and $X=0$ then $C$ is of the form $\begin{pmatrix}C_{11}&-\lambda \tra {\bar C}_{21}\\ C_{21}&c\end{pmatrix}$ with $C_{11}$ skew-Hermitian and $\bar \lambda c\in\mathcal{K}^o$.
\item  If $A=\diag(1,\ldots,1,0,\ldots,0)$ with number of $1$s equal $m<l$ and $X$ has first $m$ entries $0$ then $C$ is of the form $\begin{pmatrix}C_{11}& 0 \\ *&*\end{pmatrix}$ with $C_{11}$ skew-Hermitian and $X$ must be zero. 
\end{enumerate}
\end{lemma}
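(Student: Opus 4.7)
The plan is to extract the required information from the defining relation $\tra{\bar g}\beta g=\beta$ by matching a single block of the product. With $g$ written in the $3\times 3$ block form suggested by the decomposition of sizes $1,l,l$, a direct computation shows that the $(2,2)$-block of $\tra{\bar g}\beta g$ equals $2\,\tra{\bar X}X+\tra{\bar A}C+\tra{\bar C}A$, while the corresponding block of $\beta$ is zero. The lemma thus reduces to analysing the single identity
$$2\,\tra{\bar X}X+\tra{\bar A}C+\tra{\bar C}A=0$$
under the two sets of hypotheses on $A$ and $X$.

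For part~(1), setting $X=0$ collapses the identity to $\tra{\bar A}C+\tra{\bar C}A=0$; equivalently, $\bar A C$ is skew-Hermitian (using $\tra{\bar A}=\bar A$ as $A$ is diagonal). Since $\bar A=\diag(1,\ldots,1,\bar\lambda)$ only scales the last row of $C$ by $\bar\lambda$, the skew-Hermitian condition forces exactly the block form claimed: the top-left $(l-1)\times(l-1)$ block $C_{11}$ is skew-Hermitian, $C_{12}=-\lambda\,\tra{\bar C_{21}}$, and $\bar\lambda c\in\mathcal{K}^o$. Equivalently, this is Lemma~\ref{lemmaA} applied with a single trailing $\bar\lambda$ on the diagonal, and mirrors the argument of Corollary~\ref{corA}.

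For part~(2), I partition $C=\begin{pmatrix}C_{11}&C_{12}\\ C_{21}&C_{22}\end{pmatrix}$ with $C_{11}$ of size $m\times m$ matching the block form of $A$, and split $X=(0,X_2)$ with $X_2$ of length $l-m$. Then $\tra{\bar A}C=\begin{pmatrix}C_{11}&C_{12}\\ 0&0\end{pmatrix}$, $\tra{\bar C}A=\begin{pmatrix}\tra{\bar C_{11}}&0\\ \tra{\bar C_{12}}&0\end{pmatrix}$, and $\tra{\bar X}X=\begin{pmatrix}0&0\\ 0&\tra{\bar X_2}X_2\end{pmatrix}$. Matching blocks in the displayed identity yields $C_{11}$ skew-Hermitian, $C_{12}=0$, and $2\,\tra{\bar X_2}X_2=0$; since the characteristic is odd, this last condition gives $\tra{\bar X_2}X_2=0$.

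The only non-mechanical step is deducing $X_2=0$ from $\tra{\bar X_2}X_2=0$. The diagonal entries of $\tra{\bar X_2}X_2$ are the norms $\bar x x$ of the individual coordinates $x$ of $X_2$, and the norm map $N\colon\mathcal{K}\to k$, $x\mapsto \bar x x$, has trivial kernel on $\mathcal{K}^\times$: if $x\neq 0$ then $\bar x\neq 0$, hence $\bar x x\neq 0$ in the field $\mathcal{K}$. So every coordinate of $X_2$ vanishes, giving $X_2=0$ and hence $X=0$. Everything else is pure block-matrix bookkeeping from the unitary condition.
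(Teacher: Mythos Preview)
Your proof is correct and follows essentially the same approach as the paper: both extract the $(2,2)$-block identity $2\,\tra{\bar X}X+\tra{\bar A}C+\tra{\bar C}A=0$ from $\tra{\bar g}\beta g=\beta$, then handle part~(1) via Lemma~\ref{lemmaA}/Corollary~\ref{corA} and part~(2) by the same block partition of $C$ and $X$. Your argument is in fact slightly more explicit than the paper's, which simply asserts that $\tra{\bar M}M=0$ implies $M=0$ without spelling out the norm argument you give.
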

\begin{proof} We use the equation $\tra {\bar g} \beta g=\beta$ and get $2\tra {\bar X}X= -(\bar \tra CA+\tra{\bar A}C)$. In the first case $X=0$, so we can use Corollary~\ref{corA} to get the required form for $C$. In the second case we write $C=\begin{pmatrix}C_{11}&C_{12}\\ C_{21}&C_{22}\end{pmatrix}$ then the equation implies: $-\begin{pmatrix}C_{11}+\tra\bar C_{11}& C_{12}\\ \tra\bar C_{12}& 0\end{pmatrix}=\begin{pmatrix}0&0\\0&2\tra{\bar M}M\end{pmatrix}$ where $M=(x_{m+1},\ldots,x_l)$. This gives the required result. This implies $X=0$ and $C$ has required form.
\end{proof}

\begin{lemma}\label{lemmaI}
Let $g=\begin{pmatrix}\alpha&X&Y\\ *&A&*\\ *&0&D\end{pmatrix}$ be in $\text{U}(2l+1,\mathcal{K})$ then $X=0$ and $D=\tra{\bar A}^{-1}$.
\end{lemma}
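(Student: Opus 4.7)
The plan is to extract the two relevant block identities from the defining equation $\tra{\bar g}\beta g=\beta$ applied to
$$g=\begin{pmatrix}\alpha & X & Y\\ E & A & B\\ F & 0 & D\end{pmatrix},\qquad \beta=\begin{pmatrix}2 & 0 & 0\\ 0 & 0 & I_l\\ 0 & I_l & 0\end{pmatrix},$$
where the hypothesis that the lower-middle block is $0$ is the crucial input. First I would write out $\tra{\bar g}\beta g$ as a $3\times 3$ block product, just as in the proof of Corollary~\ref{corA}. The multiplication by $\beta$ on the right of $g$ simply swaps the second and third block-rows of $g$ (and doubles the top row), so the computation is short.

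The first key identity comes from the $(2,2)$-block. Because the lower-middle block of $g$ vanishes, the only surviving contribution is $2\tra{\bar X}X$, and this must equal the $(2,2)$-block of $\beta$, which is $0$. Hence $\tra{\bar X}X=0$. Since $X=(X_1,\ldots,X_l)$ is a row over $\mathcal{K}$, the diagonal entries of $\tra{\bar X}X$ are $\bar X_i X_i$. As $\mathcal{K}$ is a field and $\sigma$ is an automorphism, $\bar X_i X_i=0$ forces $X_i=0$ for each $i$, so $X=0$.

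With $X=0$ in hand, I would turn to the $(2,3)$-block of $\tra{\bar g}\beta g$. Its contributions are $2\tra{\bar X}Y+\tra{\bar A}D$, which collapses to $\tra{\bar A}D$ once $X=0$. Equating this to the $(2,3)$-block of $\beta$, namely $I_l$, gives $\tra{\bar A}D=I_l$, and hence $D=\tra{\bar A}^{-1}$. Implicit here is that $A$ is invertible, which is forced by this same relation.

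There is no real obstacle: the argument is just bookkeeping of block entries, the vanishing of $C$ doing all the work in decoupling the $X$-equation from the $D$-equation. The only subtle point is the passage from $\tra{\bar X}X=0$ to $X=0$, which uses nothing more than $\mathcal{K}$ being a field together with $\sigma$ being an injective field automorphism.
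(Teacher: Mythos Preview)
Your argument is correct and matches the paper's own proof essentially line for line: compute $\tra{\bar g}\beta g=\beta$, read off $2\tra{\bar X}X=0$ from the $(2,2)$-block and $2\tra{\bar X}Y+\tra{\bar A}D=I_l$ from the $(2,3)$-block, and conclude. You have simply filled in the details the paper leaves implicit, in particular the passage from $\tra{\bar X}X=0$ to $X=0$ via the diagonal entries $\bar X_i X_i$; note also that dividing out the factor~$2$ is legitimate precisely because the odd-dimensional case is only considered in odd characteristic.
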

\begin{proof} We compute $\tra{\bar g}\beta g=\beta$ and get $2\tra{\bar X}X=0$ and $2\tra{\bar X}Y+\tra{\bar A}D=I$. This gives the required result.
\end{proof}

\begin{lemma}\label{lemmaG}
Let $g=\begin{pmatrix}\alpha& 0&Y\\ 0&A&B\\F&0&D\end{pmatrix}$, with $A$ an invertible diagonal matrix, be in $\text{U}(2l+1,\mathcal{K})$ then $\alpha\bar\alpha=1, F=0=Y$, $D=\bar A^{-1}$ and $\tra{\bar D}B+\tra{\bar B}D=0$. 
\end{lemma}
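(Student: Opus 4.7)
The strategy is direct computation using the defining relation $\tra{\bar g}\beta g=\beta$, exactly in the spirit of Corollary~\ref{corA}, Corollary~\ref{corB}, Lemma~\ref{lemmaF} and Lemma~\ref{lemmaI}. Because $X=0$, $E=0$ and the middle-right block $C$ of $g$ is $0$, the matrix $\tra{\bar g}$ has a particularly clean form: the only nonzero off-diagonal blocks in the first row/column of $\tra{\bar g}$ come from $F$ and $Y$ (with the $E,X,C$ blocks already gone), so the product $\tra{\bar g}\beta g$ splits into a handful of block identities that can be read off one at a time.

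First I would form $\beta g$ using the fixed Hermitian form
$\beta=\begin{pmatrix}2&&\\ &&I_l\\ &I_l&\end{pmatrix}$, which simply permutes and doubles the rows of $g$. Then I would multiply on the left by $\tra{\bar g}$ and equate the resulting nine blocks with the corresponding blocks of $\beta$. The $(0,0)$ entry gives $2\alpha\bar\alpha=2$, i.e.\ $\alpha\bar\alpha=1$. The $(0,\{1,\ldots,l\})$ block gives $\tra{\bar F}A=0$, and since $A$ is invertible this forces $F=0$. The $(0,\{-1,\ldots,-l\})$ block gives $2\bar\alpha Y+\tra{\bar F}B=0$; using $F=0$ and $\bar\alpha\ne0$ this yields $Y=0$. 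The middle-lower block $(\{1,\ldots,l\},\{-1,\ldots,-l\})$ gives $\tra{\bar A}D=I_l$, and because $A$ is diagonal we have $\tra A=A$, so $\tra{\bar A}=\bar A$ and therefore $D=\bar A^{-1}$. Finally, the $(\{-1,\ldots,-l\},\{-1,\ldots,-l\})$ block gives $2\tra{\bar Y}Y+\tra{\bar B}D+\tra{\bar D}B=0$; combined with $Y=0$ this is exactly $\tra{\bar D}B+\tra{\bar B}D=0$. The remaining blocks either repeat these identities or are automatically zero.

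There is no genuine obstacle here; the only thing to be careful about is bookkeeping with the unusual indexing $0,1,\ldots,l,-1,\ldots,-l$ and with the factor of $2$ coming from the $(0,0)$ entry of $\beta$, together with the observation that $A$ being diagonal lets us identify $\tra{\bar A}^{-1}$ with $\bar A^{-1}$, which is what makes the statement match the formulation in the lemma.
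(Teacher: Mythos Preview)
Your proposal is correct and follows exactly the same approach as the paper: compute $\tra{\bar g}\beta g$ block by block and equate with $\beta$, reading off $\alpha\bar\alpha=1$, $F=0$, $Y=0$, $\tra{\bar A}D=I_l$, and $\tra{\bar D}B+\tra{\bar B}D=0$ from the appropriate entries. The paper displays the full $3\times 3$ block product in one line, but the content and the order in which the identities are extracted are identical to yours.
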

\begin{proof}
\begin{eqnarray*}
\tra \bar g\beta g &=& \begin{pmatrix}\bar\alpha& 0&\tra{\bar F}\\ 0&\tra\bar A& 0 \\\tra\bar Y& \tra\bar B&\tra\bar D\end{pmatrix} \begin{pmatrix}2& &\\ &&I\\&I&\end{pmatrix}\begin{pmatrix}\alpha& 0&Y\\ 0&A&B\\F&0&D\end{pmatrix}\\
&=& \begin{pmatrix}2\alpha\bar\alpha& \tra\bar F A &2\bar\alpha Y+\tra\bar FB\\ \tra\bar AF &0 &\tra\bar AD\\2\alpha \tra\bar Y +\tra\bar BF& \tra\bar DA & 2\tra\bar YY +\tra\bar DB+\tra\bar BD\end{pmatrix}.
\end{eqnarray*}
Equating this with $\beta$ we get the required result.
\end{proof}

\begin{lemma}\label{lemmaH}
Let $g=\begin{pmatrix}\alpha & 0&0\\ 0&A&B\\0&0&\bar A^{-1}\end{pmatrix}\in \text{U}(2l+1,\mathcal{K})$ where $A=\diag(1,\ldots,1,\lambda)$ is invertible then $B$ is of the form $\begin{pmatrix}B_{11}&-\bar\lambda^{-1}\tra\bar B_{21}\\ B_{21} & b \end{pmatrix}$ where $B_{11}$ is skew-Hermitian and $\bar\lambda^{-1} b\in\mathcal{K}^o$.
\end{lemma}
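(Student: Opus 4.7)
The plan is to reduce this odd-order statement to the already-established even-order Corollary~\ref{corB}. The key observation is that $g$ is block-diagonal: the scalar $\alpha$ sits alone in the top-left, the block $h := \begin{pmatrix} A & B \\ 0 & \bar A^{-1} \end{pmatrix}$ occupies the remaining $2l \times 2l$ positions, and all entries linking these two parts are zero. Correspondingly the defining form splits as $\beta = (2) \oplus \beta_e$, where $\beta_e = \begin{pmatrix} & I \\ I & \end{pmatrix}$ is the split Hermitian form defining $\text{U}(2l,\mathcal{K})$.

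First I would unpack the identity $\tra{\bar g}\beta g = \beta$ in block form. The top-left entry gives $2\alpha\bar\alpha = 2$, so $\alpha \in \mathcal{K}^1$ as a free byproduct (not strictly needed for the conclusion). The cross blocks linking the $(0,0)$ slot to the rest vanish trivially because the off-diagonal blocks of $g$ connecting the two parts are zero. What remains is the lower-right $2l \times 2l$ block of the equation, which reads $\tra{\bar h}\beta_e h = \beta_e$; this says precisely that $h \in \text{U}(2l,\mathcal{K})$.

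Once $h$ is recognized as a member of the even-order split unitary group with diagonal part $A = \diag(1,\ldots,1,\lambda)$ and zero bottom-left block, Corollary~\ref{corB} applies verbatim to $h$ and yields the asserted shape of $B$: the $(l-1)\times(l-1)$ top-left block $B_{11}$ is skew-Hermitian, the top-right column equals $-\bar\lambda^{-1}\tra{\bar B}_{21}$, and the bottom-right scalar $b$ satisfies $\bar\lambda^{-1} b \in \mathcal{K}^o$. Alternatively, if one prefers to avoid appealing to the corollary, one can derive the skew-Hermiticity of $A^{-1}B$ from the lower-right block equation directly and then invoke Lemma~\ref{lemmaA} with $Y = A^{-1} = \diag(1,\ldots,1,\lambda^{-1})$.

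No genuine obstacle is foreseen; the lemma is essentially a packaged corollary of Corollary~\ref{corB} together with the decoupling of the $\alpha$-slot. The only manual work is the routine block-matrix verification that the odd-order defining equation splits as claimed, after which the even-order result finishes the job.
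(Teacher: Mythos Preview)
The paper states Lemma~\ref{lemmaH} without proof, evidently regarding it as the routine odd-order analogue of Corollary~\ref{corB}. Your proposal is correct and supplies precisely the omitted argument: the block-diagonal shape of $g$ decouples the $\alpha$-slot from the $2l\times 2l$ block $h=\begin{pmatrix}A&B\\0&\bar A^{-1}\end{pmatrix}$, the defining relation forces $h\in\text{U}(2l,\mathcal{K})$, and then Corollary~\ref{corB} (or, as you note, Lemma~\ref{lemmaA} applied to the skew-Hermitian matrix $A^{-1}B$ with $Y=\diag(1,\ldots,1,\lambda^{-1})$) yields the asserted form of $B$.
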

\section{Finite Unitary Groups}
In the next section, we talk about cryptography. In cryptography, we need to deal
explicitly with finite fields. In this context, when
$\mathcal{K}=\mathbb F_{q^2}$, we prove a theorem similar in spirit to
Steinberg~\cite[Section 6.2]{st3}. The proof is an obvious corollary
to our algorithm.
\begin{theorem}\label{main-cor}
Fix an element $\zeta$ which generates the cyclic group $\mathbb
F_{q^2}^{\times}$, the subgroup $\mathbb F_{q^2}^1$ is generated by
$\zeta_1=\zeta^{q-1}$. We add following matrices to the respective set of elementary matrices:
\begin{itemize}
\item $h\left(\zeta\right)=\diag(1,\ldots,\zeta,1,\ldots,\bar{\zeta}^{-1})$
  whenever $d=2l$.
\item $\left\{\begin{array}{cc}
h\left(\zeta\right)=&\diag(1,\ldots,\zeta,1,\ldots,\bar{\zeta}^{-1})\\
h\left(\zeta_1\right)=&\diag(\zeta_1,1,\ldots,1)
\end{array}
\right.$ whenever $d=2l+1$.
\end{itemize}
Then the group U$(d,q^2)$ is generated by elementary matrices and the
matrices defined above. 
\end{theorem}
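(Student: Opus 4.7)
The plan is to invoke Theorem~A and reduce the statement to an elementary observation about the cyclic groups $\mathbb{F}_{q^2}^\times$ and $\mathbb{F}_{q^2}^1$. By Theorem~A, every $g\in\text{U}(d,q^2)$ can be written as $g=E\cdot D$, where $E$ is a product of elementary matrices and $D$ is a residual diagonal matrix of the form listed there. It is therefore enough to show that every such $D$ already lies in the subgroup generated by $h(\zeta)$ (together with $h(\zeta_1)$ when $d$ is odd).

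For $d=2l$, the residual diagonal reads $D=\diag(1,\ldots,1,\lambda,1,\ldots,1,\bar\lambda^{-1})$ with $\lambda\in\mathbb{F}_{q^2}^\times$. Since $\zeta$ generates $\mathbb{F}_{q^2}^\times$, I would pick $k$ with $\lambda=\zeta^k$; then $\bar\lambda^{-1}=\bar\zeta^{-k}$, and comparing diagonal entries gives $D=h(\zeta)^k$ on the nose.

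For $d=2l+1$, the residual diagonal is $D=\diag(\alpha,1,\ldots,1,\lambda,1,\ldots,1,\bar\lambda^{-1})$ with $\alpha\in\mathbb{F}_{q^2}^1$ and $\lambda\in\mathbb{F}_{q^2}^\times$. The element $\zeta_1=\zeta^{q-1}$ has multiplicative order $q+1$, which is precisely $|\mathbb{F}_{q^2}^1|$, so $\zeta_1$ generates the cyclic group $\mathbb{F}_{q^2}^1$. I would then pick $m$ with $\zeta_1^m=\alpha$ and $k$ with $\zeta^k=\lambda$. The two diagonal matrices $h(\zeta_1)$ and $h(\zeta)$ act non-trivially on disjoint coordinates (the $0$-position versus the $\pm l$-positions), so they commute, and comparing entries yields $D=h(\zeta_1)^m\cdot h(\zeta)^k$.

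There is essentially no obstacle beyond Theorem~A itself and the cyclicity of finite multiplicative subgroups of a field; the proof is a routine corollary. The only side check I would include for completeness is that $h(\zeta)$ and $h(\zeta_1)$ genuinely belong to $\text{U}(d,q^2)$. This is immediate from the defining relation $\tra{\bar X}\beta X=\beta$: in the even case the only non-trivial constraint is on the swapped pair $(l,-l)$, where the contribution is $\bar\zeta\cdot\bar\zeta^{-1}=1$ (and similarly $\zeta^{-1}\cdot\zeta=1$ for $(-l,l)$); in the odd case the additional constraint at position $(0,0)$ is $\bar\zeta_1\cdot 2\cdot\zeta_1=2$, which holds because $\zeta_1\bar\zeta_1=1$ by the very definition of $\mathbb{F}_{q^2}^1$.
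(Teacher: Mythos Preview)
Your proposal is correct and is precisely the argument the paper has in mind: the authors state only that the theorem is ``an obvious corollary to our algorithm'' (i.e., to Theorem~A), and your reduction of the residual diagonal $D$ to a power of $h(\zeta)$ (together with a power of $h(\zeta_1)$ in the odd case) via the cyclicity of $\mathbb{F}_{q^2}^\times$ and of $\mathbb{F}_{q^2}^1$ is exactly that corollary spelled out. The only cosmetic point is that Theorem~A really gives $g=E_1\,D\,E_2$ with elementary words on both sides (row \emph{and} column operations), not $g=E\cdot D$, but this makes no difference for the generation statement.
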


\subsection{Special Unitary group $\text{SU}(d,q^2)$}
In the case of $\text{SU}(2l,q^2)$ a simple and straightforward
enhancement of our algorithm reduces a matrix
$g\in\text{SU}(2l,q^2)$ to the identity matrix. Thus the word problem
in $\text{SU}(2l,q^2)$ is completely solved as with $\text{SL}(d,q)$
using only elementary matrices;
this is particularly useful for the MOR cryptosystem. An analysis of a MOR cryptosystem
similar to the MOR cryptosystem over $\text{SL}(d,q)$~\cite{Ma} will
be done in the next section. 

For the reduction to identity, note that Theorem~\ref{theorema} reduces $g$ to $\diag(1, \ldots, 1, \lambda, 1\ldots, 1, \bar\lambda^{-1})$. However, since $\det(g)=\lambda\bar\lambda^{-1}=1$, we have $\lambda=\bar\lambda$ and $\lambda\in \mathbb F_q^{\times}$. Now, for $s=\varepsilon \lambda$ where $\bar\varepsilon=-\varepsilon$,
$$w_{l,-l}(s)w_{l,-l}(-\varepsilon)=\diag(1,\ldots,1,\lambda,1,\ldots,\lambda^{-1}).$$
So if we add $w_{l,-l}(s)w_{l,-l}(-\varepsilon)$ to the output of
Theorem~\ref{theorema}, we have the identity matrix.

In the case of $\text{SU}(2l+1,q^2)$ we need to add an extra generator
$h(\zeta_1)=\diag(\zeta_1,1,\ldots,1)$ where $\zeta_1$ is a generator of
$\mathbb F_{q^2}^1$. Now we can reduce an element of the form $
\diag(\alpha,1\ldots,\lambda,1,\ldots,\bar\lambda^{-1})$ to $
\diag(1,1\ldots,\lambda,1,\ldots,\bar\lambda^{-1})$ by multiplying
with the suitable power of $h(\zeta_1)$. Note that finding the suitable power involves solving a discrete logarithm problem.
Then we use similar computations for
even-order case to reduce  $\diag(1,1\ldots,\lambda,1,\ldots,\bar\lambda^{-1})$ to identity.  

\section{The MOR cryptosystem on unitary groups}
Briefly speaking, the MOR cryptosystem is a simple and straightforward
generalization of the classic ElGamal cryptosystem and was put forward by
Paeng et.~al.~\cite{crypto2001}. In a MOR cryptosystem one works with
the automorphism group rather than the group itself. It provides an
interesting change in perspective in public-key cryptography -- from
finite cyclic groups to finite non-abelian groups. The MOR cryptosystem was studied for the special linear group in details by Mahalanobis~\cite{Ma}. For many other classical groups, except the orthogonal groups, the analysis of a MOR cryptosystem remains almost the same. So we will remain brief in this paper and refer an interested reader to ~\cite{Ma}.

\noindent The description of the MOR cryptosystem is as follows:

Let $G=\langle g_1,g_2,\ldots,g_s\rangle$ be a finite group. Let $\phi$ be a non-identity automorphism.
\begin{itemize}
\item \textbf{Public-key:} Let $\{\phi(g_i)\}_{i=1}^s$ and $\{\phi^{\mathfrak{m}}(g_i)\}_{i=1}^s$ is public.
\item \textbf{Private-key:} The integer $\mathfrak{m}$ is private.
\end{itemize}
\textbf{Encryption:}\leavevmode\newline
To encrypt a plaintext $\mathfrak{M}\in G$, get an arbitrary integer $r\in[1,|\phi|]$ compute $\phi^r$ and $\phi^{r\mathfrak{m}}$. 
The ciphertext is $\left(\phi^r,\phi^{r\mathfrak{m}}\left(\mathfrak{M}\right)\right)$.\newline
\textbf{Decryption:}\leavevmode\newline
After receiving the ciphertext $\left(\phi^r,\phi^{r\mathfrak{m}}\left(\mathfrak{M}\right)\right)$, the user knows the private key $\mathfrak{m}$. So she computes $\phi^{mr}$ from $\phi^r$ and then computes $\mathfrak{M}$. 

To develop a MOR cryptosystem we need a thorough understanding of the
automorphisms group of the group involved. The automorphisms of unitary groups are well
described in the literature. We mention them briefly to facilitate
further discussion. 
\subsection{Automorphism Group of Unitary Groups}\label{automorphismclassical}
First we define the similitude
group. We need these groups to define diagonal automorphisms.
\begin{definition}[Unitary similitude group]
 The unitary similitude group is defined as:
$$\text{GU}(d,q^2)=\{X\in \text{GL}(d,q^2) \mid \tr{\bar X}\beta X=\mu\beta, \text{for\ some\ } \mu\in \mathbb F_q^{\times}\}.$$
\end{definition}
\noindent Note that the multiplier $\mu$ defines a group homomorphism from $\text{GU}(d,q^2)$ to $\mathbb F_q^{\times}$ with kernel the unitary group.

{\bf Conjugation Automorphisms: } The conjugation maps $g\mapsto ngn^{-1}$ for $n\in\text{GU}(d,q^2)$ are called conjugation automorphisms. Furthermore,
they are composition of two types of automorphisms -- inner automorphisms given as conjugation by elements of $\text{U}(d,q^2)$ and diagonal automorphisms given as conjugation by diagonals of  $\text{GU}(d,q^2)$.

{\bf Central Automorphisms: } Let $\chi \colon \text{U}(d,q^2)
\rightarrow \mathbb F_{q^2}^1$ be a group homomorphism. Then the
central automorphism $c_{\chi}$ is given by $g\mapsto \chi(g)g$. Since
$[\text{U}(d,q^2), \text{U}(d,q^2)]= [\text{SU}(d,q^2),
\text{SU}(d,q^2)]=  \text{SU}(d,q^2)$~\cite[Theorem 11.22]{gr}, any
$\chi$ is equivalent to a group homomorphism from
$\text{U}(d,q^2)/\text{SU}(d,q^2)$ to $\mathbb F_{q^2}^1$. There are
at most $q+1$ such maps. 

{\bf Field Automorphisms: } For any automorphism $\sigma$ of the field
$\mathbb F_{q^2}$, replacing all entries of a matrix by their image
under $\sigma$ give us a field automorphism.

The following theorem, due to Dieudonn\'{e}~\cite[Theorem 25]{di}, describes all automorphisms:
\begin{theorem}
 Let $q$ be odd and $d\geq 4$. Then any automorphism $\phi$ of the unitary group $\text{U}(d,q^2)$ is written as $c_{\chi}\iota\delta\sigma$ where $c_{\chi}$ is a central automorphism, $\iota$ is an inner automorphism, $\delta$ is a diagonal automorphism and
 $\sigma$ is a field automorphism.
\end{theorem}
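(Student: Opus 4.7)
The plan is to reduce to the simple quotient $\text{PSU}(d,q^2)$, apply the classical Steinberg--Dieudonn\'e classification of automorphisms of finite simple groups of Lie type, and then lift the decomposition back to $\text{U}(d,q^2)$ modulo a central twist.

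First I would identify two characteristic subgroups that any automorphism $\phi$ must preserve: the center $Z$ of $\text{U}(d,q^2)$, which consists of scalar matrices with entries in $\mathbb{F}_{q^2}^1$, and the commutator subgroup $[\text{U},\text{U}]=\text{SU}(d,q^2)$ (the statement already invoked from \cite[Theorem~11.22]{gr} in the discussion of central automorphisms). Consequently $\phi$ descends to an automorphism $\bar\phi$ of the finite simple group $\text{PSU}(d,q^2)=\text{SU}/Z$, which is simple under the standing hypotheses $d\geq 4$ and $q$ odd.

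Next, invoke the Steinberg--Dieudonn\'e classification for $\text{PSU}(d,q^2)$: every automorphism of this simple group factors as $\bar\iota\,\bar\delta\,\bar\sigma$, where $\bar\iota$ is inner, $\bar\delta$ is a diagonal automorphism induced by conjugation by an element of $\text{PGU}$, and $\bar\sigma$ is a field automorphism. No graph automorphism arises because the non-trivial symmetry of the Dynkin diagram of type $A_l$ has already been consumed in twisting to $^{2}A_l$, as emphasized in Section~\ref{chevalleygroups}.

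Finally I would lift the decomposition to $\text{U}(d,q^2)$ itself. Choose automorphisms $\iota$, $\delta$, $\sigma$ of $\text{U}(d,q^2)$ realizing $\bar\iota$, $\bar\delta$, $\bar\sigma$ respectively: inner by an element of $\text{U}(d,q^2)$, diagonal by a suitable diagonal element of the similitude group $\text{GU}(d,q^2)$ defined above, and field by entrywise application of $\sigma$. Then $\psi:=\phi\circ(\iota\delta\sigma)^{-1}$ induces the identity on $\text{PSU}(d,q^2)$, so for every $g\in\text{U}(d,q^2)$ we have $\psi(g)=\chi(g)\,g$ with $\chi(g)\in Z\cap\text{U}(d,q^2)=\mathbb{F}_{q^2}^1$. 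Multiplicativity of $\psi$, combined with centrality of $\chi(g)$, forces $\chi$ to be a group homomorphism $\text{U}(d,q^2)\to\mathbb{F}_{q^2}^1$, so $\psi=c_{\chi}$ and therefore $\phi=c_{\chi}\,\iota\,\delta\,\sigma$. The main obstacle is the middle step: the Steinberg--Dieudonn\'e classification is the genuinely deep ingredient (it ultimately rests on the BN-pair/building of $\text{PSU}(d,q^2)$ and on showing that an abstract automorphism must respect root subgroups up to a field twist) and must be cited rather than reproved; identifying the characteristic subgroups and executing the central-twist bookkeeping are then essentially routine.
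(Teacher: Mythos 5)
The paper offers no proof of this statement: it is quoted directly as a theorem of Dieudonn\'e (\cite[Theorem 25]{di} in the bibliography), so there is nothing internal to compare your argument against. Your outline is the standard modern route to such a result --- pass to the simple quotient $\text{PSU}(d,q^2)$, invoke the classification of its automorphisms (inner--diagonal and field; the graph symmetry of type $A_l$ being absorbed into the field automorphisms of $\mathbb{F}_{q^2}$ after twisting), and account for the discrepancy on the full group by a central twist. That is a reasonable way to organize a proof, and the deep ingredient is correctly identified as something to be cited, exactly as the paper does for the whole theorem.

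There is, however, a genuine gap in your final lifting step. From the fact that $\psi=\phi\circ(\iota\delta\sigma)^{-1}$ induces the identity on $\text{PSU}(d,q^2)$ you conclude directly that $\psi(g)=\chi(g)g$ for \emph{every} $g\in\text{U}(d,q^2)$; but inducing the identity on $\text{PSU}$ only controls $\psi$ on elements of $\text{SU}(d,q^2)$, and even there it only gives $\psi(g)\in gZ(\text{SU})$. Two further standard observations are needed. First, since $\text{SU}(d,q^2)$ is perfect, the map $g\mapsto \psi(g)g^{-1}$ from $\text{SU}$ to the abelian group $Z(\text{SU})$ is a homomorphism from a perfect group, hence trivial, so $\psi$ is the identity on $\text{SU}$. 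Second, for arbitrary $g\in\text{U}(d,q^2)$ and $h\in\text{SU}$ one has $\psi(ghg^{-1})=ghg^{-1}$ and $\psi(ghg^{-1})=\psi(g)h\psi(g)^{-1}$, so $g^{-1}\psi(g)$ centralizes $\text{SU}(d,q^2)$; because $\text{SU}$ acts absolutely irreducibly on the natural module, Schur's lemma identifies this centralizer with the scalars in $\text{U}(d,q^2)$, i.e.\ with $\mathbb{F}_{q^2}^1\cdot I$. Only then does $g\mapsto g^{-1}\psi(g)$ become a homomorphism $\chi\colon\text{U}(d,q^2)\to\mathbb{F}_{q^2}^1$ and $\psi=c_{\chi}$. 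Without these two steps the claim that $\psi$ is a central automorphism of the full unitary group does not follow from what you have written.
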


As we saw above there are three kind of automorphisms in an unitary group. One is conjugation automorphism, the others are central and field automorphisms. A central automorphism being multiplication by an element of the center, that is a field element. Exponentiation of that will be a discrete logarithm problem in $\mathbb{F}_q$. Similar is the case with a field automorphism. So the only choice for a better MOR cryptosystem is a conjugating automorphism.

Once, we have decided that the automorphism that we are going to use in the MOR cryptosystem will act by conjugation. Further analysis is straightforward and follows~\cite[Section 7]{Ma}. Recall that we insisted that automorphisms in the MOR cryptosystem are presented as action on generators. In this case, the generators are \emph{elementary matrices} and the group is a \emph{special unitary group} of even-order. Other groups can be used and analyzed similarly. Note that two things can happen: one can find the conjugator element for the automorphism in use, finding the conjugator up to a scalar multiple is enough. Or one cannot find the conjugator in the automorphism.

In the first case, the discrete logarithm problem in the automorphism becomes a discrete logarithm problem in a matrix group. Assume that we found the conjugating matrix $A$ up to a scalar multiple, where $A\in\text{GU}(d,q^2)$. Now the discrete logarithm problem in $\phi$ becomes a discrete logarithm problem in $A$. One can show that by suitably choosing $A$, the discrete logarithm in $A$ is embedded in the field $\mathbb{F}_{q^{2d}}$. This argument is presented in details~\cite[Section 7.1]{Ma}. We will not repeat it here. In the next section (reduction of security), we show that one can find this conjugating element for unitary groups and this gives us a proof of Theorem B. 
 
The success of any cryptosystem comes from a balance between speed and
security. In this paper, we deal with both speed and security of the MOR
cryptosystem briefly. For an implementation of the MOR cryptosystem, we need to
compute power of an automorphism. The algorithm of our choice is the
famous \textbf{square-and-multiply} algorithm. Since we do not use any
special algorithm for squaring, squaring and multiplying is the same
for us. So we talk about multiplying two automorphisms. We present the automorphisms as action on generators, i.e., $\phi(g_i)$ is a matrix for $i=1,2,\ldots,s$. The first step of the algorithm is to find the word in generators from the matrix\footnote{One can also present the automorphisms as word in generators, we choose matrices.}. So now the automorphism is $\phi(g_i)=w_i$ where each $w_i$ is a word in generators.
Once that is done then composing with an automorphism is substituting
each generator in the word by another word. This can be done fast. The
challenging thing is to find the matrix corresponding to the word thus
formed. This is not a hard problem, but can be both time and memory
intensive. What is the best way to do it is still an open question!
However, there are many shortcuts available. One being an obvious
time-memory trade off, like storing matrices corresponding to a word
in generators. The other being there are many trivial and non-trivial
relations among these generators and moreover these generators are
sparse matrices. One can use these properties in the implementation. 

This problem, which is of independent interest in computational group
theory and is the reason that we insist on automorphisms being
presented as generators for the MOR cryptosystem. For more information, see~\cite[Section 8]{Ma}.   

\subsection{Reduction of security}

In this subsection, we show that for unitary groups, the security of the MOR cryptosystem is the hardness of the discrete logarithm problem in $\mathbb{F}_{q^{2d}}$. This is the same as saying that we can find the conjugating matrix up to a scalar multiple. Let $\phi$ be an automorphism that works by conjugation, i.e., $\phi=\iota_g$ for some $g$ and we try to determine $g$.

{\bf Step 1:}  The automorphism $\phi$ is presented as action on generators. Thus $\phi(x_{i,-i}(s))=g(I+se_{i,-i})g^{-1}=I+sge_{i,-i}g^{-1}$. This implies that we know $\varepsilon ge_{i,-i}g^{-1}$ and similarly $\varepsilon ge_{-i,i}g^{-1}$ for fixed $s=\varepsilon$. We first claim that we can determine $N:=gD$ where $D$ is diagonal.

When $d=2l$, write $g$ in the column form $\left[G_1,\ldots G_l, G_{-1},\ldots, G_{-l}\right]$. 
Now, 
\begin{enumerate}
\item $\left[G_1,\ldots G_l, G_{-1},\ldots, G_{-l}\right]\varepsilon e_{i,-i}=\left[0,\ldots,0,\varepsilon G_{i},0,\ldots,0\right] $
where $G_{i}$ is at $-i\textsuperscript{th}$ place. Multiplying this with $g^{-1}$ gives us scalar multiple of $G_i$, say $d_i$.
\item $\left[G_1,\ldots G_l, G_{-1},\ldots, G_{-l}\right]\varepsilon e_{-i,i}=\left[0,\ldots,0,\varepsilon G_{-i},0,\ldots,0\right] $
where $G_{-i}$ is at $i\textsuperscript{th}$ place. Multiplying this with $g^{-1}$ gives us scalar multiple of $G_{-i}$, say $d_{-i}$.
\end{enumerate}
Thus we get $N=gD$ where $D$ is a diagonal matrix $\diag(d_1,\ldots, d_l,d_{-1},\ldots,d_{-l})$.
In the case when $d=2l+1$ we write
$g=[G_0,G_1,\ldots,G_l,G_{-1},\ldots,G_{-l}]$ and get scalar multiple
of columns $G_i$ and $G_{-i}$. We now use $x_{i,0}(t)$ and $x_{0,i}(t)$ to get linear combination of $G_0$ with $G_{i}$ or $G_{-i}$, say we get $\alpha G_0+\beta G_{-1}$.  In this case we get $N=gD$ where $D$ is of the form $$\begin{pmatrix} \alpha &&&&&&\\ &d_1&&&&&\\ &&\ddots &&&& \\ &&&d_l&&&\\ \beta &&&&d_{-1}&&\\ &&&&&\ddots&\\ &&&&&&d_{-l}\end{pmatrix}.$$

{\bf Step 2:}
Now we compute $N^{-1}\phi(x_r(t))N=D^{-1}g^{-1}(gx_r(t)g^{-1})gD=D^{-1}x_r(t)D$. Substituting various $x_r(t)$ it amounts to computing $D^{-1}e_rD$. 
When $d=2l$, we first compute $D^{-1}(e_{i,j}-e_{-j,-i})D$ and get $d_i^{-1}d_j$, $d_{-i}^{-1}d_{-j}$ for $i\neq j$. 
Then we compute $D^{-1}e_{i,-i}D, D^{-1}e_{-i,i}D$ and get $d_id_{-i}^{-1}, d_{-i}d_{i}^{-1}$. We form a matrix 
$$\diag(1,d_2^{-1}d_1,\ldots,d_l^{-1}d_1, d_{-1}^{-1}d_1,\ldots,d_{-l}^{-1}d_1)$$
and multiply it to $N=gD$ to get $d_1g$. Thus we can determine $g$ up to a scalar multiple and the attack follows~\cite[Section 7.1.1]{Ma}. 

In the case $d=2l+1$, the matrix $D$ is almost a diagonal matrix except the first column. However while computing $D^{-1}(e_{12}-e_{-2,-1})D$ we also get $d_{-2}^{-1}\beta$ and by computing $D^{-1}(e_{0,1}-2e_{-1,0}-e_{-1,1})D$ we get $\alpha^{-1}D$. Thus we can multiply $\alpha G_0+\beta G_{-1}$ 
by $\beta^{-1}d_1= \beta^{-1}d_{-2}d_{-2}^{-1}d_{-1}d_{-1}^{-1}d_1$ and get $\alpha \beta^{-1}d_1 G_0 + d_1G_{-1}$. With the computation in even case we can determine $d_{1}G_{-1}$ and hence can determine $\alpha G_0$. Furthermore, since we know $\alpha^{-1}d_1$ we can determine $d_1G_0$ thus in this case as well we can determine $d_1g$, i.e., $g$ up to a scalar multiple.


\section{Conclusion}
For us, this paper is an interplay of finite (non-abelian) groups and
public key cryptography. Computational group theory, in particular
computations with quasi-simple groups have a long and distinguished
history~\cite{praeger,kantor,CMT,lo,all}. The interesting thing to us is, some of
the questions that arise naturally when dealing with the MOR
cryptosystem are interesting in its own right in computational group
theory and are actively studied. The row-column operations that we
developed is one example of that. In the row-column operations we
developed, we used a different set of generators. These generators
have a long history starting with Chevalley. In our knowledge, we are
the first to use them in row-column operations in Unitary
groups. Earlier works were mostly done using the standard
generators. It seems that Chevalley generators might offer a paradigm
shift in algorithms with quasi-simple groups. In Magma, there is an
implementation of row-column operations in unitary groups in a function
\emph{ClassicalRewriteNatural}. We compared
that function with our algorithm in an actual implementation on even
order unitary groups. To select parameters for our simulation, we followed Costi's work~\cite[Table 6.2]{costi}. In one case, the characteristic of the field was fixed at $7$ and the size of the matrix at $20$, we varied the degree of the extension of the field from $4$ to $34$. We then picked at random elements from the GeneralUnitaryGroup and timed our algorithm. The final time was the average over one thousand repetitions. We did the same with the magma function using special unitary groups. Times of both these algorithms were tabulated and is presented in Figure 1. In another case, we kept the field fixed at $7^{10}$ and changed the size of the matrix. In all cases, the final time was the average of one thousand random repetitions. The timing was tabulated and presented in Figure 1. It seems the our algorithm is much better than that of Costi's from all aspects.
\begin{figure}
\centering
\includegraphics[scale=0.5]{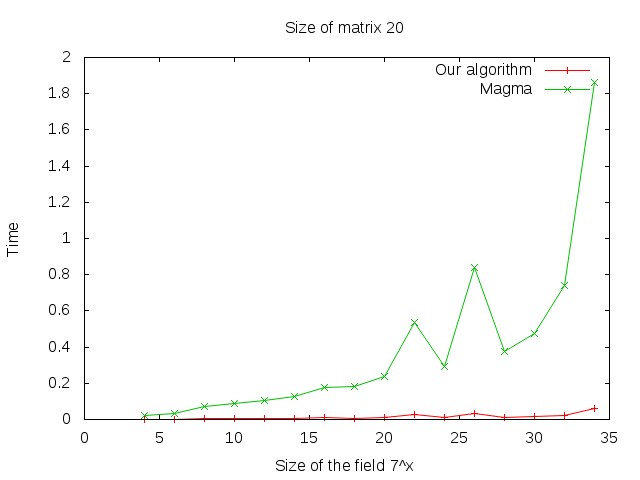}
\includegraphics[scale=0.5]{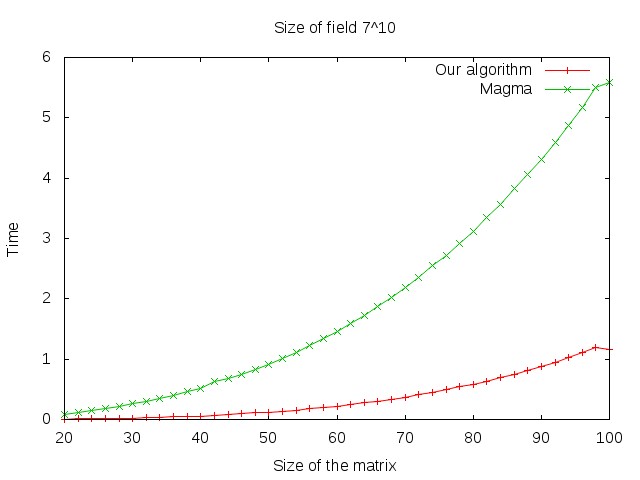}
\caption{Some simulations comparing our algorithm with the one inbuilt in Magma}
\end{figure}
\nocite{brooksbank} 
\bibliographystyle{amsplain}
\bibliography{paper}

\providecommand{\bysame}{\leavevmode\hbox to3em{\hrulefill}\thinspace}
\providecommand{\MR}{\relax\ifhmode\unskip\space\fi MR }
\providecommand{\MRhref}[2]{%
  \href{http://www.ams.org/mathscinet-getitem?mr=#1}{#2}
}
\providecommand{\href}[2]{#2}
\begin{thebibliography}{10}

\bibitem{csaba}
S.~Ambrose, S.~Murray, C.~E. Praeger, and C.~Schneider, \emph{Constructive
  membership testing in black-box classical groups}, Proceedings of The Third
  International Congress on Mathematical Software, LNCS, vol. 6327, 2011,
  pp.~54--57.

\bibitem{all}
Henrik B\"{a}\"{a}rnhielm, Derek Holt, C.~R. Leedham-Green, and E.~A. O'Brien,
  \emph{A practical model for computation with matrix groups}, J. Symbolic
  Computation \textbf{68} (2015), 27--50.

\bibitem{magma}
Wieb Bosma, John Cannon, and Catherine Playoust, \emph{The {M}agma algebra
  system. {I}. {T}he user language}, J. Symbolic Comput. \textbf{24} (1997),
  no.~3-4, 235--265, Computational algebra and number theory (London, 1993).
  \MR{1484478}

\bibitem{brooksbank}
Peter Brooksbank, \emph{Constructive recognition of classical groups in their
  natural representation}, Journal of Symbolic Computation \textbf{35} (2003),
  195--239.

\bibitem{brooksbank1}
\bysame, \emph{Fast constructive recognition of black-box unitary groups}, LMS
  Journal of Computation and Mathematics \textbf{6} (2003), 162--197.

\bibitem{ca}
Roger Carter, \emph{Simple groups of {Lie} type}, Pure and Applied Mathematics,
  vol.~28, John Wiley \& Sons, 1972.

\bibitem{ch}
C.~Chevalley, \emph{Sur certains groupes simples}, Tohoku Math. J. \textbf{7}
  (1955), no.~2, 14--66.

\bibitem{CMT}
Arjeh~M. Cohen, Scott~H. Murray, and D.~E. Taylor, \emph{Computing in groups of
  {Lie} type}, Mathematics of computation \textbf{73} (2003), no.~247,
  1477--1498.

\bibitem{costi}
Elliot Costi, \emph{Constructive membership testing in classical groups}, Ph.D.
  thesis, Queen Mary, Univ.~of London, 2009.

\bibitem{di}
Jean Dieudonne, \emph{On the automorphisms of the classical groups. with a
  supplement by {Loo-Keng Hua}}, Memoirs of the American Mathematical Society,
  1951.

\bibitem{grc}
Joseph~F. Grcar, \emph{Mathematicians of {Gaussian} elimination}, Notices of
  the AMS \textbf{58} (2011), no.~6, 782--792.

\bibitem{gr}
Larry~C. Grove, \emph{Classical groups and geometric algebra}, vol.~39,
  American Mathematical Society, Graduate Studies in Mathematics, 2002.

\bibitem{kantor}
William Kantor and \`{A}kos Seress, \emph{Black box classical groups}, vol.
  149, Memoirs of the American Mathematical Society, 2001.

\bibitem{lo}
C.~R. Leedham-Green and E.~A. O'Brien, \emph{Constructive recognition of
  classical groups in odd characteristic}, J. Algebra \textbf{322} (2009),
  no.~3, 833--881.

\bibitem{Ma}
Ayan Mahalanobis, \emph{A simple generalization of the {ElGamal} cryptosystem
  to non-abelian groups {II}}, Communications in Algebra \textbf{40} (2012),
  no.~9, 3583--3596.

\bibitem{praeger}
Alice~C. Niemeyer and Cheryl~E. Praeger, \emph{A recognition algorithm for
  classical groups over finite fields}, Proceedings of the London Mathematical
  Society \textbf{77} (1998), no.~3, 117--169.

\bibitem{ob1}
E.~A. O'Brien, \emph{Algorithms for matrix groups}, Groups St Andrews 2009 in
  Bath, Volume 2, London Math. Soc. Lecture Note Ser., vol. 388, Cambridge
  Univ. Press, 2011, pp.~297--323.

\bibitem{crypto2001}
Seong-Hun Paeng, Kil-Chan Ha, Jae~Heon Kim, Seongtaek Chee, and Choonsik Park,
  \emph{New public key cryptosystem using finite non-abelian groups}, Crypto
  2001 (J.~Kilian, ed.), LNCS, vol. 2139, Springer-Verlag, 2001, pp.~470--485.

\bibitem{seress}
\'Akos Seress, \emph{An introduction to computational group theory}, Notices of
  the {AMS} \textbf{44} (1997), no.~6, 671--679.

\bibitem{st3}
Robert Steinberg, \emph{Variations on a theme of {C}hevalley}, Pacific Journal
  of Mathematics \textbf{9} (1959), 875--891.

\end{thebibliography}
\end{document}